\newcommand\blfootnote[1]{%
	\begingroup
	\renewcommand\thefootnote{}\footnote{#1}%
	\addtocounter{footnote}{-1}%
	\endgroup}
\theoremstyle{plain}
\newtheorem{theorem}{Theorem}[section]
\newtheorem{proposition}[theorem]{Proposition}
\newtheorem{lemma}[theorem]{Lemma}
\newtheorem{corollary}[theorem]{Corollary}
\newtheorem{claim}{Claim}
\theoremstyle{definition}
\newtheorem{definition}[theorem]{Definition}
\newtheorem{remark}[theorem]{Remark}
\newtheorem{example}[theorem]{Example}
\newtheorem{problem}[theorem]{Problem}
\newtheorem{problems}[theorem]{Problems}
\newcommand{\NN}{\mathbb{N}}
\newcommand{\RR}{\mathbb{R}}
\newcommand{\CC}{\mathbb{C}}
\newcommand{\KK}{\mathbb{K}}
\newcommand{\AP}{\mathcal{AP}}
\newcommand{\Bi}{\mathscr{B}}
\newcommand{\BL}{\mathcal{BL}}
\newcommand{\Fc}{\mathcal{F}}
\newcommand{\Gc}{\mathcal{G}}
\newcommand{\Lc}{\mathcal{L}}
\newcommand{\mf}{\mathfrak{m}}
\newcommand{\Nc}{\mathcal{N}}
\newcommand{\Part}{\mathscr{P}}
\newcommand{\Pc}{\mathcal{P}}
\newcommand{\Uc}{\mathcal{U}}
\newcommand{\URec}{\operatorname{URec}}
\newcommand{\FRec}{\operatorname{FRec}}
\newcommand{\RRec}{\operatorname{RRec}}
\newcommand{\Rec}{\textup{Rec}}
\newcommand{\FHC}{\operatorname{FHC}}
\newcommand{\HC}{\textup{HC}}
\newcommand{\lbo}{\ell\mathit{bo}}
\newcommand{\cl}{\overline}
\newcommand{\orb}{\mathcal{O}}
\newcommand{\lspan}{\operatorname{span}}
\newcommand{\eps}{\varepsilon}
\newcommand{\Bdsup}{\overline{\operatorname{Bd}}}
\newcommand{\dinf}{\underline{\operatorname{dens}}}
\newcommand{\BDsup}{\overline{\mathcal{BD}}}
\newcommand{\Dinf}{\underline{\mathcal{D}}}
\def\1{{\mathchoice {\rm 1\mskip-4mu l} {\rm 1\mskip-4mu l} {\rm 1\mskip-4.5mu l} {\rm 1\mskip-5mu l}}}
\newcommand{\supp}{\operatorname{supp}}
\newcommand{\ep}[2]{\langle #1 , #2 \rangle}
\begin{document}
\begin{center}
	\begin{LARGE}
		{\bf Invariant measures from locally bounded orbits}
	\end{LARGE}
\end{center}

\vspace*{-0.85cm}

\begin{center}
	\begin{Large}
		by
	\end{Large}
\end{center}

\vspace*{-0.85cm}

\begin{center}
	\begin{Large}
		Antoni L\'opez-Mart\'inez\blfootnote{\textbf{2020 Mathematics Subject Classification}: 47A16, 47A35, 37A05, 37B20.\\ \textbf{Key words and phrases}: Linear Dynamics, Invariant measures, Locally bounded orbits, Reiterative recurrence.\\ \textbf{Journal-ref}: Results in Mathematics, Volume 79, article number 185, (2024).\\ \textbf{DOI}: https://doi.org/10.1007/s00025-024-02204-5}
	\end{Large}
\end{center}

\vspace*{-0.25cm}

\begin{abstract}
	Motivated by recent investigations of Sophie Grivaux and \'Etienne Matheron on the existence of invariant measures in Linear Dynamics, we introduce the concept of {\em locally bounded orbit} for a continuous linear operator $T:X\longrightarrow X$ acting on a Fr\'echet space $X$, and we use this new notion to construct (non-trivial) $T$-invariant probability Borel measures on $(X,\Bi(X))$.
\end{abstract}

\vspace*{-0.75cm}

\section{Introduction}

This paper focusses on some aspects of the relationship between Topological and Measurable Dynamics in the particular context of Linear Dynamics, our main aim being to find some sufficient conditions for a linear dynamical system to admit (non-trivial) invariant probability Borel measures.

A {\em linear dynamical system} is a pair $(X,T)$ where $X$ is a {\em separable infinite-dimensional Fr\'echet space} (that is, a locally convex and completely metrizable topological vector space), and where $T:X\longrightarrow X$ is a {\em continuous linear operator} acting on $X$. We will briefly write $T \in \Lc(X)$, and given a vector $x \in X$ we will denote its {\em $T$-orbit} by
\[
\orb_T(x) := \{ T^nx \ ; \ n \geq 1 \}.
\]

A linear dynamical system $T \in \Lc(X)$ can be examined from various perspectives. For instance, one may focus on {\em Topological Dynamics} and, if we denote by $\cl{E}$ the topological closure of any subset $E \subset X$, one can study notions such as {\em recurrence} and {\em hypercyclicity}: a vector $x \in X$ is said to be
\begin{enumerate}[--]
	\item {\em recurrent for $T$} if $x \in \cl{\orb_T(x)}$, and the {\em set of recurrent vectors for $T$} will be denoted by $\Rec(T)$;
	
	\item {\em hypercyclic for $T$} if $X = \cl{\orb_T(x)}$, and the {\em set of hypercyclic vectors for $T$} will be denoted by $\HC(T)$.
\end{enumerate}
In Linear Dynamics the concept of ``hypercyclicity'' has historically been the main studied property and \cite{BaMa2009_book,GrPe2011_book} represent a comprehensive compilation of such a theory, but ``linear recurrence'' has recently appeared in the 2014 paper \cite{CoMaPa2014}, followed by the works \cite{BoGrLoPe2022}, \cite{CarMur2022_MS}, \cite{CarMur2022_arXiv}, \cite{GriLoPe2022}, \cite{Lopez2022} and \cite{LoMe2023} among others.

Alternatively, one can adopt the {\em Measurable Dynamics} (also called {\em Ergodic Theory}) point of view and, considering a {\em positive finite} (often normalized and therefore {\em probability}) {\em measure} $\mu$ defined on the {\em $\sigma$-algebra of Borel sets} $\Bi(X)$ of $X$, investigate notions such as {\em invariance} and {\em ergodicity}:
\begin{enumerate}[--]
	\item such a measure $\mu$ is called {\em $T$-invariant} (or simply {\em invariant}) if $\mu(A) = \mu(T^{-1}(A))$ for all $A \in \Bi(X)$;
	
	\item and $\mu$ is called {\em $T$-ergodic} (or just {\em ergodic}) if it is invariant and $\mu(A) \in \{0,\mu(X)\}$ when $A=T^{-1}(A)$.
\end{enumerate}
The study of Ergodic Theory in the context of Linear Dynamics started in 1994 with the pioneering work of Flytzanis (see \cite{Flyztanis1994,Flyztanis1995}), and was then further developed in the papers \cite{BaGri2006}, \cite{BaGri2007}, \cite{GriMa2014}, \cite{BaMa2016} and \cite{GriLo2023} between others. See also the textbooks \cite{BaMa2009_book}, \cite{ChoTarVak1987_book} and \cite{Walters1982_book}.

It is by now well understood that the notion of {\em ergodicity} can be seen as the measure-theoretic counterpart of {\em hypercyclicity}, while {\em invariance} can be compared with {\em recurrence}. To state this analogy let $\NN$ be the set of positive integers, denote the {\em return set} from any $x \in X$ to any subset $E \subset X$ by
\[
\Nc_T(x,E) := \{ n \in \NN \ ; \ T^nx \in E \},
\]
and note that a vector $x \in X$ is {\em hypercyclic for $T \in \Lc(X)$} precisely when the return set $\Nc_T(x,U)$ is infinite for every non-empty open subset $U \subset X$, and that $x \in X$ is {\em recurrent for $T$} when $\Nc_T(x,U)$ is infinite at least for every neighbourhood $U$ of $x$. Using this notation we reach the announced analogy:
\begin{enumerate}[--]
	\item when $\mu$ is a $T$-ergodic measure with {\em full support} (that is, $\mu(U)>0$ for every open set $U \neq \varnothing$), it was exhibited by Bayart and Grivaux in 2006 that then $\mu$-a.e.\ vector $x \in X$ is not only hypercyclic, but even {\em frequently hypercyclic}: for every non-empty open subset $U \subset X$ the return set $\Nc_T(x,U)$ has positive lower density $\dinf(\Nc_T(x,U))>0$, where the {\em lower density} for any set $A \subset \NN$ is
	\[
	\dinf(A) := \liminf_{N\to\infty} \frac{\# (A \cap [1,N])}{N};
	\]
	the vector $x \in X$ is then called {\em frequently hypercyclic for $T$} and we will denote by $\FHC(T)$ the {\em set of frequently hypercyclic vectors for $T$}; see \cite[Proposition~3.12]{BaGri2006} or \cite[Corollary~5.5]{BaMa2009_book} for the details of this argument, which uses the {\em Birkhoff pointwise ergodic theorem} in a crucial way;
	
	\item and when $\mu$ is just $T$-invariant, it was checked in \cite{GriLo2023} that then $\mu$-a.e.\ vector $x \in X$ is also not only recurrent, but even {\em frequently recurrent}: for every neighbourhood $U$ of $x$ the return set $\Nc_T(x,U)$ has positive lower density $\dinf(\Nc_T(x,U))>0$; the vector $x \in X$ is then called {\em frequently recurrent for~$T$} and we will denote by $\FRec(T)$ the {\em set of frequently recurrent vectors for $T$}; see \cite[Lemma~3.1]{GriLo2023} for the details of this argument, which uses again the {\em Birkhoff pointwise ergodic theorem} this time combined with the {\em ergodic decomposition theorem}, and see \cite{BoGrLoPe2022} for more on frequent recurrence.
\end{enumerate}

These results emphasize the importance of being able to ensure the existence of invariant measures possibly satisfying additional properties such as having full support or being ergodic, weakly and even strongly mixing. This kind of question goes back to the classical work of Oxtoby and Ulam \cite{OxUlam1939} where the existence of invariant positive finite Borel measures, but for continuous automorphisms acting on completely metrizable spaces, was fully characterized. In our linear framework note that every operator $T \in \Lc(X)$ admits the atomic {\em Dirac mass} $\delta_0$ as an invariant measure since the zero-vector is always a fixed point, so we will say that a probability (or positive finite) Borel measure $\mu$ on $(X,\Bi(X))$ is {\em non-trivial} if it differs from $\delta_0$ (or from every positive multiple of $\delta_0$).

The existence of non-trivial invariant measures in Linear Dynamics has recently been explored in the works \cite{GriMa2014} and \cite{GriLo2023}. In fact, \cite[Section~2]{GriMa2014} extends to the linear setting a constructive technique already known for compact dynamical systems obtaining that, under some ``natural topological assumptions'' on the space $X$ and the operator $T$, then one can construct a $T$-invariant measure with full support from a {\em frequently hypercyclic} vector $x \in \FHC(T)$. This was slightly refined in \cite[Section~2]{GriLo2023} weakening the ``frequent hypercyclicity'' requirement into that of ``reiterative recurrence'': we say that $x \in X$ is {\em reiteratively recurrent for $T$} if for every neighbourhood $U$ of $x$ the return set $\Nc_T(x,U)$ has positive upper Banach density $\Bdsup(\Nc_T(x,U))>0$, where the {\em upper Banach density} for any set $A \subset \NN$ is
\[
\Bdsup(A) := \limsup_{N\to\infty} \left( \max_{m \geq 0} \frac{\# (A \cap [m+1,m+N])}{N} \right).
\]
We will denote by $\RRec(T)$ the {\em set of reiteratively recurrent vectors for $T$} and, even though the inclusions $\FHC(T) \subset \FRec(T) \subset \RRec(T)$ are usually strict (see \cite[Theorem~5.8]{BoGrLoPe2022}), exactly the same ``natural topological assumptions'' from \cite[Section~2]{GriMa2014} were used in \cite[Section~2]{GriLo2023} to construct a non-trivial $T$-invariant measure from each non-zero reiteratively recurrent vector $x \in \RRec(T) \setminus \{0\}$.\newpage

The aforementioned ``topological assumptions'' on $T \in \Lc(X)$ require the underlying space $X$ to be a {\em Banach space} in both works \cite{GriMa2014}~and~\cite{GriLo2023}, since some kind of ``local boundedness'' is needed along the construction of invariant measures developed. The main objective of this paper, and what we do in Section~\ref{Sec_2:invariant.measures}, is extending the constructive technique exposed in \cite{GriMa2014,GriLo2023} to the context of operators acting on Fr\'echet spaces via the new concept of {\em locally bounded orbit} (see Definition~\ref{Def:lbo}). The rest of the paper is organized as follows: in Section~\ref{Sec_3:applications} we apply the theory developed in Section~\ref{Sec_2:invariant.measures} by adding general restrictions on $X$ and $T$, we discuss why the invariant measures constructed are optimal in terms of Banach limits, and we adapt the main ideas from Section~\ref{Sec_2:invariant.measures} to study {\em almost-$\Fc$-recurrence} and some equivalences of {\em Devaney chaos} in the Fr\'echet setting. In Section~\ref{Sec_4:lbo} we elaborate further on the notion of ``locally bounded orbit'' by exhibiting some explicit examples and stability results.

\section{Invariant measures on Fr\'echet spaces}\label{Sec_2:invariant.measures}

In this section we recall the technique developed in \cite{GriMa2014} and \cite{GriLo2023} to construct invariant measures for operators acting on Banach spaces and we extend it to the Fr\'echet setting by introducing the concept of {\em locally bounded orbit} (see Definition~\ref{Def:lbo} below). The basic results that we need from \cite{GriMa2014,GriLo2023} were originally stated for Polish dynamical systems so that we start by presenting some notation.

\subsection{From the Banach to the Fr\'echet case}

We will say that the pair $(X,T)$ is a {\em Polish dynamical system} if $T:X\longrightarrow X$ is a continuous map acting on a Polish space $X$, that is, a separable completely metrizable topological space. Note that the concept of ``linear dynamical system'' as defined at the Introduction of this paper is indeed a particular case of Polish system. Moreover, the topological and measurable notions already defined, such as ``recurrent/hypercyclic vector'' and ``invariant/ergodic measure'', make sense in this rather general context and by abuse of notation we will utilize them also for Polish systems. See \cite{DaGlas2015} for recent investigations on the relation between both Polish and linear dynamical systems.

Given a Polish space $X$ we will denote by $\tau_X$ the original (separable and completely metrizable) topology of the space, but we will often consider a second topology $\tau$ on $X$ fulfilling some properties with respect to $\tau_X$. The {\em $\sigma$-algebra of Borel sets} induced by each of these topologies will be denoted by $\Bi(X,\tau_X)$ and $\Bi(X,\tau)$ respectively, and if they coincide we will simply write $\Bi(X)$. All the measures considered in this paper will be {\em non-negative finite Borel measures} defined on Polish spaces, hence {\em regular} (see \cite[Proposition~8.1.12]{Cohn2013_book}), and we will usually omit the words ``Borel'' and ``regular''. Moreover, for any non-negative measure $\mu$ on a Polish space $(X,\tau_X)$ we will denote its {\em support} by
\[
\supp(\mu) := X \setminus \bigcup \{ U \subset X \ ; \ U \text{ is $\tau_X$-open and } \mu(U)=0 \}.
\]
It is easy to check that a point $x \in X$ belongs to the support $\supp(\mu)$ if and only if $\mu(U)>0$ for every measurable neighbourhood $U$ of $x$. Let $\ell^{\infty}$ be the {\em space of all bounded sequences of real numbers}, we will write $\1 \in \ell^{\infty}$ for the sequence with all its terms equal to $1$, and for each $A \subset \NN$ the element $\1_A \in \ell^{\infty}$ will be the sequence in which the $n$-th coordinate is exactly $1$ if $n \in A$ and $0$ otherwise. Recall also that a {\em Banach limit} is a positive and shift-invariant continuous linear functional $\mf:\ell^{\infty}\longrightarrow \RR$, which preserves the value of the limit for every convergent sequence (see \cite[page~82]{Conway1989_book}).

Using the previously introduced notation we can explore the very technical lemma, originally stated in \cite[Remarks~2.6~and~2.12]{GriMa2014} and later refined in \cite[Lemma~2.1]{GriLo2023}, which allows to construct plenty of invariant (but possibly null) measures for every Polish dynamical system $T:(X,\tau_X)\longrightarrow (X,\tau_X)$ admitting a second Hausdorff topology $\tau$ on $X$ which fulfills some conditions with respect to $\tau_X$:
\begin{enumerate}[--]
	\item \cite[Lemma~2.1]{GriLo2023}: {\em Let $(X,T)$ be a Polish dynamical system, denote by $\tau_X$ the original topology of $X$ and suppose that there exists a Hausdorff topology $\tau$ on $X$ fulfilling that
		\begin{enumerate}[{\em(a)}]
			\item[{\em($\alpha$)}] $T:X\longrightarrow X$ is $\tau$-$\tau$-continuous;
			
			\item[{\em($\beta$)}] $\tau \subset \tau_X$;
			
			\item[{\em($\gamma$)}] every $\tau$-compact set is $\tau$-metrizable;
			
			\item[{\em($\delta$)}] $\Bi(X,\tau) = \Bi(X,\tau_X)$;
		\end{enumerate}
		then for each $x_0 \in X$ and each Banach limit $\mf:\ell^{\infty}\longrightarrow \RR$ one can find a (non-negative) $T$-invariant finite Borel regular measure $\mu$ on $(X,\Bi(X))$ for which $\mu(X)\leq 1$ and such that $\mu(K) \geq \mf(\1_{\Nc_T(x_0,K)})$ for every $\tau$-compact set $K \subset X$. Moreover, we have the inclusion
		\[
		\supp(\mu) \subset \cl{\orb_T(x_0)}^{\tau}.
		\]}
\end{enumerate}

In \cite[Theorem~2.3]{GriLo2023} it is shown that conditions slightly stronger than ($\alpha$), ($\beta$), ($\gamma$) and ($\delta$) allow to obtain non-null measures by applying \cite[Lemma~2.1]{GriLo2023} to each reiteratively recurrent point. This result has the following automatic corollary (already observed in \cite[Proof of Theorem~1.3]{GriLo2023}):

\begin{corollary}\label{Cor:Banach->measure}
	Let $Y$ be a Banach space, assume that its dual Banach space $X:=Y'$ is separable, and let $T \in \Lc(X)$ be the adjoint of some $S \in \Lc(Y)$. Given a (non-zero) vector $x_0 \in \RRec(T)$ one can find a (non-trivial) $T$-invariant probability measure $\mu_{x_0}$ on $(X,\Bi(X))$ such that
	\[
	x_0 \in \supp(\mu_{x_0}) \subset \cl{\orb_T(x_0)}^{\sigma(X,Y)}.
	\]
	Moreover, if the set $\RRec(T)$ is dense in $X$, then there exists a $T$-invariant probability measure $\mu$ on $(X,\Bi(X))$ with full support. In particular, the result is true for every operator $T \in \Lc(X)$ with respect to the weak topology $\sigma(X,X')$ as soon as $(X,\|\cdot\|)$ is a separable reflexive Banach space.
\end{corollary}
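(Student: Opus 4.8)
The plan is to apply \cite[Theorem~2.3]{GriLo2023} directly, taking as the auxiliary topology $\tau$ the weak-$*$ topology $\sigma(X,Y)$ on $X=Y'$ and keeping $\tau_X$ as the norm topology. Since we are told the corollary is automatic from that theorem, essentially all of the work consists in checking that this particular choice of $\tau$ fulfils the hypotheses of \cite[Theorem~2.3]{GriLo2023}, i.e.\ the conditions ($\alpha$)--($\delta$) of \cite[Lemma~2.1]{GriLo2023} together with the strengthening that allows the construction to be run at reiteratively recurrent points.

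First I would verify the four structural conditions. Condition ($\alpha$), the $\sigma(X,Y)$-$\sigma(X,Y)$-continuity of $T$, is exactly the hypothesis that $T=S'$ is an adjoint, since an operator on a dual space is weak-$*$-continuous precisely when it is the adjoint of an operator on the predual. Condition ($\beta$), $\sigma(X,Y)\subset\tau_X$, is the standard fact that the weak-$*$ topology is coarser than the norm topology. For ($\gamma$) and ($\delta$) I would first record that separability of $X=Y'$ forces $Y$ to be separable: then every $\sigma(X,Y)$-compact set is norm-bounded (by the uniform boundedness principle) and the weak-$*$ topology is metrizable on norm-bounded sets, giving ($\gamma$); and the coincidence $\Bi(X,\sigma(X,Y))=\Bi(X,\|\cdot\|)$ holds because the norm $f\mapsto\|f-g\|=\sup_{\|y\|\leq 1}|(f-g)(y)|$ is weak-$*$-lower-semicontinuous, so translated norm balls are weak-$*$-Borel, which separability upgrades to equality of the two $\sigma$-algebras, giving ($\delta$).

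The decisive extra ingredient --- the ``local boundedness'' that makes the dual setting work and that supplies the strengthening of ($\gamma$) required by \cite[Theorem~2.3]{GriLo2023} --- is that every norm-closed ball $\cl{B}(x_0,r)$ is $\sigma(X,Y)$-compact by the Banach--Alaoglu theorem, so each point of $X$ has a neighbourhood basis of $\tau$-compact sets. Applying \cite[Lemma~2.1]{GriLo2023} with a Banach limit $\mf$ adapted to $x_0$, the positivity $\Bdsup(\Nc_T(x_0,\cl{B}(x_0,r)))>0$ guaranteed by $x_0\in\RRec(T)$ yields $\mu(\cl{B}(x_0,r))\geq\mf(\1_{\Nc_T(x_0,\cl{B}(x_0,r))})>0$, so the measure is non-null with $\mu(X)\leq 1$; after normalising I obtain the probability measure $\mu_{x_0}$ with $x_0\in\supp(\mu_{x_0})\subset\cl{\orb_T(x_0)}^{\sigma(X,Y)}$. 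This is precisely the assertion of \cite[Theorem~2.3]{GriLo2023}, which I would simply invoke once the hypotheses above are in place.

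For the ``moreover'' statement I would fix a countable set $\{x_k\}_{k\geq 1}\subset\RRec(T)$ dense in $X$ (possible by separability and the density of $\RRec(T)$), set $\mu:=\sum_{k\geq 1}2^{-k}\mu_{x_k}$, and observe that a countable convex combination of $T$-invariant probability measures is again a $T$-invariant probability measure whose support contains each $x_k$, hence is dense and closed and therefore all of $X$. For separable reflexive $X$ one identifies $X=(X')'$ with predual $Y=X'$, notes that every $T\in\Lc(X)$ equals the adjoint $(T')'$ and that $\sigma(X,X')$ is then the weak-$*$ topology, so the argument applies verbatim. The only genuine obstacles are the functional-analytic facts underlying ($\gamma$)/($\delta$) and the weak-$*$-compactness of balls; once these are recorded the corollary is indeed automatic from \cite[Theorem~2.3]{GriLo2023}.
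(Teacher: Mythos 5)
Your proposal is correct and follows essentially the same route as the paper: verify ($\alpha$), ($\beta$) and the compact-neighbourhood-basis condition ($\gamma^*$) via Banach--Alaoglu, deduce ($\gamma$) and ($\delta$) from the separability of $X=Y'$ (hence of $Y$), and invoke \cite[Theorem~2.3]{GriLo2023}, i.e.\ \cite[Lemma~2.1]{GriLo2023} applied with Banach limits realising $\Bdsup(\Nc_T(x_0,K_r))>0$. The only loose point is that a single radius $r$ does not by itself give $x_0\in\supp(\mu_{x_0})$ --- one must combine the measures obtained for a shrinking sequence of balls $K_{r_n}$ --- but this is exactly what the cited theorem does internally, so your deferral to it is legitimate.
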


Note that given any Banach space $X$ we are denoting by $X'$ its {\em topological dual space}, which is again a Banach space, and given a dual pair $(Y,X)$ we are denoting by $\sigma(X,Y)$ the {\em weak topology} on the space $X$ induced by $Y$. Using this ``locally convex spaces''-notation let us briefly explain how Corollary~\ref{Cor:Banach->measure} is implicitly proved in \cite[Proof of Theorem~1.3 and Theorem~2.3]{GriLo2023}: when $T \in \Lc(X)$ is the adjoint operator of some $S \in \Lc(Y)$ it is well-known that
\begin{enumerate}[(a)]
	\item[($\alpha$)] $T:X\longrightarrow X$ is $\sigma(X,Y)$-$\sigma(X,Y)$-continuous;
	
	\item[($\beta$)] $\sigma(X,Y) \subset \tau_{\|\cdot\|}$, where $\tau_{\|\cdot\|}$ is the norm topology of $(X,\|\cdot\|)$;
	
	\item[($\gamma^*$)] every vector of $X$ has a basis of $\tau_{\|\cdot\|}$-neighbourhoods consisting of $\sigma(X,Y)$-compact sets;
\end{enumerate}
and if $(X,\|\cdot\|)$ is separable \cite[Fact~2.1]{GriMa2014} shows that condition ($\gamma^*$) implies the $\sigma(X,Y)$-metrizability of every $\sigma(X,Y)$-compact set, but also that $\Bi(X,\sigma(X,Y)) = \Bi(X,\tau_{\|\cdot\|})$, which are conditions ($\gamma$)~and~($\delta$) needed to apply \cite[Lemma~2.1]{GriLo2023}. Then, each reiteratively recurrent vector $x_0 \in \RRec(T)$ can be shown to return enough frequently to every of its $\sigma(X,Y)$-compact and $\tau_{\|\cdot\|}$-neighbourhoods of the type $K_r := \{ x \in X \ ; \ \|x_0-x\|\leq r \}$ for $r>0$, to admit a Banach limit $\mf_r:\ell^{\infty}\longrightarrow \RR$ such that
\[
\mf_r(\1_{\Nc_T(x_0,K_r)}) = \Bdsup(\Nc_T(x_0,K_r)) > 0.
\]
In order to repeat this proof when $(X,\tau_X)$ is a Fr\'echet space obtained as the strong dual of some locally convex space $(Y,\tau_Y)$, we must prove that conditions ($\alpha$), ($\beta$), ($\gamma$) and ($\delta$) still hold between $\sigma(X,Y)$ and $\tau_X$ (see Lemma~\ref{Lem:top.conditions} below), but we also need to solve the following problem: if $(X,\tau_X)$ is not a Banach space then the $\tau_X$-neighbourhoods of the vector selected $x_0 \in \RRec(T)$ are no longer $\sigma(X,Y)$-compact and \cite[Lemma~2.1]{GriLo2023} seems useless. The following definition will avoid this issue:

\begin{definition}\label{Def:lbo}
	Let $T\in \Lc(X)$ be an operator acting on a Fr\'echet space $X$. A vector $x \in X$ has a {\em locally bounded orbit for $T$} if there exists a neighbourhood $U$ of $x$ such that the set $U \cap \orb_T(x)$ is bounded in $X$. We will denote by $\lbo(T)$ the {\em set of vectors with locally bounded orbit for $T$}.
\end{definition}

Using this new concept we can state Theorem~\ref{The:lbo->measure} below, which is the main result of this paper. Recall first that given any Hausdorff locally convex topological vector space $(Y,\tau_Y)$, then its topological dual space $Y'$ can be endowed with a Hausdorff locally convex topology $\beta(Y',Y)$ for which a basis of $\beta(Y',Y)$-neighbourhoods of the $0_{Y'}$-vector is formed by the following family of $\sigma(Y',Y)$-closed sets
\[
\left\{ E^{\circ} \ ; \ E \subset Y \text{ is a bounded subset in } (Y,\tau_Y) \right\},
\]
where $E^{\circ} := \{ x \in Y' \ ; \ |\ep{u}{x}| \leq 1 \text{ for all } u \in E \} \subset Y'$ denotes the ({\em absolute}) {\em polar} of each set $E \subset Y$ with respect to the dual pair $(Y,Y')$. The topology $\beta(Y',Y)$ is called the {\em strong topology} on the space $Y'$ induced by $(Y,\tau_Y)$, and the Hausdorff locally convex topological vector space $(Y',\beta(Y',Y))$ is called the {\em strong dual} of $(Y,\tau_Y)$; see \cite[Chapter~8]{Jarchow1981_book} for more on duality for locally convex spaces. Recall also that $(Y,\tau_Y)$ is called {\em quasi-$\ell_{\infty}$-barrelled} if every bounded sequence in its strong dual space is equicontinuous (see \cite[Section~12.1]{Jarchow1981_book} or \cite[Definition~8.2.13]{PeBo1987_book}). Here we have our main result:

\begin{theorem}\label{The:lbo->measure}
	Let $(Y,\tau_Y)$ be a quasi-$\ell_{\infty}$-barrelled Hausdorff locally convex topological vector space, assume that its strong dual $(X,\tau_X):=(Y',\beta(Y',Y))$ is a separable Fr\'echet space, and let $T \in \Lc(X)$ be the adjoint of some linear map $S:Y\longrightarrow Y$. Given a (non-zero) vector $x_0 \in \RRec(T) \cap \lbo(T)$ one can find a (non-trivial) $T$-invariant probability measure $\mu_{x_0}$ on $(X,\Bi(X))$ such that
	\[
	x_0 \in \supp(\mu_{x_0}) \subset \cl{\orb_T(x_0)}^{\sigma(X,Y)}.
	\]
	Moreover, if the set $\RRec(T) \cap \lbo(T)$ is dense in $X$, then there exits a $T$-invariant probability measure $\mu$ on $(X,\Bi(X))$ with full support. In particular, the result is true for every operator $T \in \Lc(X)$ with respect to the weak topology $\sigma(X,X')$ as soon as $(X,\tau_X)$ is a separable reflexive Fr\'echet space.
\end{theorem}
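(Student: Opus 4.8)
The plan is to carry out, for the weak topology $\tau:=\sigma(X,Y)$, the same construction that proves the Banach case in \cite{GriLo2023}, using the hypothesis $x_0\in\lbo(T)$ together with the quasi-$\ell_{\infty}$-barrelledness of $Y$ to manufacture the $\sigma(X,Y)$-compact sets that play the role of the weak-$*$-compact balls $K_r$ available in a Banach space. First I would record that \cite[Lemma~2.1]{GriLo2023} is applicable: a separable Fr\'echet space is Polish, condition $(\alpha)$ holds because $T$ is $\sigma(X,Y)$-$\sigma(X,Y)$-continuous as the adjoint of $S$, and Lemma~\ref{Lem:top.conditions} (stated below) supplies the remaining conditions $(\beta)$, $(\gamma)$ and $(\delta)$ relating $\sigma(X,Y)$ to $\tau_X=\beta(Y',Y)$.

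The crucial new ingredient is the production of compacta. Using Definition~\ref{Def:lbo} I would fix a $\tau_X$-neighbourhood $U$ of $x_0$ for which $B:=U\cap\orb_T(x_0)$ is bounded in $X$. Since $B$ is a bounded, at most countable subset of $X=Y'$, it is the range of a bounded sequence, so quasi-$\ell_{\infty}$-barrelledness of $Y$ forces $B$ to be equicontinuous; by the Alaoglu--Bourbaki theorem every subset of $B$ then has $\sigma(X,Y)$-compact closure. As $X$ is metrizable I would next choose a countable decreasing neighbourhood basis $(W_k)_{k\ge1}$ of $x_0$ consisting of sets $W_k=x_0+E_k^{\circ}$ with $E_k\subset Y$ bounded and $W_1\subset U$; each polar $E_k^{\circ}$ is $\sigma(X,Y)$-closed, hence so is $W_k$. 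Writing $A_k:=W_k\cap\orb_T(x_0)\subset B$ and $K_k:=\cl{A_k}^{\,\sigma(X,Y)}$, the set $K_k$ is $\sigma(X,Y)$-compact and, because $W_k$ is $\sigma(X,Y)$-closed, satisfies the key inclusion $K_k\subset W_k$.

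With the compacta in hand I would invoke recurrence. As $\Nc_T(x_0,W_k)\subset\Nc_T(x_0,K_k)$ and $x_0\in\RRec(T)$, one has $\Bdsup(\Nc_T(x_0,K_k))\ge\Bdsup(\Nc_T(x_0,W_k))>0$, so there is a Banach limit $\mf_k$ with $\mf_k(\1_{\Nc_T(x_0,K_k)})=\Bdsup(\Nc_T(x_0,K_k))>0$. Feeding $\mf_k$ to \cite[Lemma~2.1]{GriLo2023} yields a $T$-invariant measure $\nu_k$ with $\nu_k(X)\le1$, $\nu_k(K_k)\ge\mf_k(\1_{\Nc_T(x_0,K_k)})>0$ and $\supp(\nu_k)\subset\cl{\orb_T(x_0)}^{\,\sigma(X,Y)}$. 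Setting $\mu_{x_0}:=C^{-1}\sum_{k\ge1}2^{-k}\nu_k$ with $C:=\sum_{k\ge1}2^{-k}\nu_k(X)\in(0,1]$ gives a $T$-invariant probability measure with $\mu_{x_0}(W_k)\ge C^{-1}2^{-k}\nu_k(K_k)>0$ for every $k$ (here $K_k\subset W_k$ is used), so $x_0\in\supp(\mu_{x_0})$; since the $\sigma(X,Y)$-closed set $\cl{\orb_T(x_0)}^{\,\sigma(X,Y)}$ is a fortiori $\tau_X$-closed it also contains $\supp(\mu_{x_0})$, and a neighbourhood of $x_0\ne0$ missing $0$ shows $\mu_{x_0}\ne\delta_0$. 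For the density clause I would take a countable dense set $\{x_j\}\subset(\RRec(T)\cap\lbo(T))\setminus\{0\}$, form $\mu:=\sum_{j}2^{-j}\mu_{x_j}$, and note that any non-empty open $U$ meets some $x_j\in\supp(\mu_{x_j})$, whence $\mu(U)>0$. The reflexive case is the specialisation $Y:=(X',\beta(X',X))$: reflexivity yields $(Y',\beta(Y',Y))=X$ and makes $Y$ barrelled, hence quasi-$\ell_{\infty}$-barrelled, while every $T\in\Lc(X)$ is the adjoint of its transpose $S=T^{t}\in\Lc(Y)$ and $\sigma(X,Y)=\sigma(X,X')$, so the theorem applies directly.

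The step I expect to be the main obstacle is precisely this manufacture of compacta: verifying that a bounded orbit-piece is equicontinuous, and that the weak-closed polar neighbourhoods $W_k$ simultaneously form a basis at $x_0$ and contain their associated compacta $K_k$. This is exactly the place where the hypotheses $x_0\in\lbo(T)$ and the quasi-$\ell_{\infty}$-barrelledness of $Y$ must substitute for the automatic weak-$*$-compactness of the balls $K_r$ that was available in the Banach setting of \cite{GriLo2023}.
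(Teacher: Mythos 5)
Your proposal is correct and follows essentially the same route as the paper's proof: the same use of quasi-$\ell_{\infty}$-barrelledness plus Alaoglu--Bourbaki to turn the bounded orbit-piece into relatively $\sigma(X,Y)$-compact sets, the same choice of a countable basis of $\sigma(X,Y)$-closed $\tau_X$-neighbourhoods (the paper invokes Remark~\ref{Rem:Theorem.conditions}(c) where you use the polars $E_k^{\circ}$ explicitly), the same Banach-limit input to \cite[Lemma~2.1]{GriLo2023}, and the same weighted-sum constructions for $\mu_{x_0}$ and for the full-support measure. The only differences are cosmetic (explicit normalisation by $C$, and phrasing of the reflexive case), so there is nothing to correct.
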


The rest of this section is devoted to prove Theorem~\ref{The:lbo->measure}, but let us include some initial remarks:

\begin{remark}\label{Rem:lbo.1}
	Let $T \in \Lc(X)$ be an operator acting on a Fr\'echet space $X$. Note that:
	\begin{enumerate}[(a)]
		\item When $X$ is Banach the equality $\lbo(T)=X$ holds because the unit ball of $X$ is a bounded set, so that Theorem~\ref{The:lbo->measure} is just an extension of Corollary~\ref{Cor:Banach->measure} to operators acting on Fr\'echet spaces.
		
		\item When $X$ is a Fr\'echet space which is not Banach:
		\begin{enumerate}[(b1)]
			\item We have that $X\setminus\Rec(T) \subset \lbo(T)$. Indeed, given any $x \in X\setminus\Rec(T)$ there is some neighbourhood $U$ of $x$ such that $U \cap \orb_T(x)$ is finite, so that $X\setminus\lbo(T) \subset \Rec(T)$.
			
			\item If $x \in \Rec(T)$ has a {\em bounded orbit for $T$} (that is, the set $\orb_T(x)$ is bounded in $X$) then $x \in \lbo(T)$. In particular: if the vector $x$ is {\em $T$-periodic} (that is, $T^px=x$ for some $p \in \NN$) or if $x$ is a {\em unimodular $T$-eigenvector} (that is, $Tx=\lambda x$ with $|\lambda|=1$) then $x \in \lbo(T)$; and if the operator $T$ is {\em power-bounded} then $\lbo(T)=X$ (see Subsection~\ref{SubSec_3.1:applications+optimality} and Section~\ref{Sec_4:lbo}).
			
			\item We have that $\HC(T) \subset \Rec(T) \setminus \lbo(T)$. Indeed, given $x \in \HC(T)$ and any neighbourhood $U$ of $x$ then $U \cap \orb_T(x)$ is dense in $U$ and not bounded since $X$ is not Banach. Thus, if $T$ is {\em Devaney chaotic} (that is, $T$ has a hypercyclic vector and the $T$-periodic vectors are dense) then $\lbo(T)$ is a dense but meager set in $X$ (see Subsection~\ref{SubSec_3.2:PF+chaos} and Section~\ref{Sec_4:lbo}).
		\end{enumerate}
	\end{enumerate}
	See Section~\ref{Sec_4:lbo} for more on this new concept of {\em locally bounded orbit}.
\end{remark}

\begin{remark}\label{Rem:Theorem.conditions}
	The reader is referred to the textbooks \cite{Jarchow1981_book,PeBo1987_book} for details regarding the following facts:
	\begin{enumerate}[(a)]
		\item The space $(Y,\tau_Y)$ in Theorem~\ref{The:lbo->measure} has to be a separable quasi-barrelled (DF)-space. Indeed, since the strong dual space $(Y',\beta(Y',Y))$ is assumed to be separable we deduce that $(Y,\tau_Y)$ has to be separable, and hence quasi-barrelled by \cite[Corollary~8.2.20]{PeBo1987_book}, but we also know that $(Y',\beta(Y',Y))$ is a Fr\'echet space so that $(Y,\tau_Y)$ has a fundamental sequence of bounded sets (see \cite[Corollary~5]{BiersBo1992}).
		
		\item Conversely to (a), and since the strong dual of any (DF)-space is always a Fr\'echet space (see for instance \cite[Section~12.4]{Jarchow1981_book}), we have that the hypothesis of Theorem~\ref{The:lbo->measure} are satisfied as soon as the starting space $(Y,\tau_Y)$ is a (DF)-space with separable strong dual.
		
		\item Note that, when $(X,\tau_X):=(Y',\beta(Y',Y))$, the definition of strong topology implies that every vector $x \in X$ admits a basis of $\tau_X$-neighbourhoods formed by $\sigma(X,Y)$-closed sets.
		
		\item In the statement of Theorem~\ref{The:lbo->measure} the sentence ``{\em $T \in \Lc(X)$ is the adjoint of $S:Y\longrightarrow Y$}'' means that ``{\em we have the dual-evaluation equality $\ep{Su}{x}=\ep{u}{Tx}$ for every pair $(u,x) \in Y\times X$}''.
	\end{enumerate}
\end{remark}

We are now ready to prove Theorem~\ref{The:lbo->measure}. See Subsection~\ref{SubSec_2.3:remarks} for some examples and extra remarks.

\subsection{Proof of Theorem~\ref{The:lbo->measure}}\label{SubSec_2.2:proof}

Let us start by showing that \cite[Lemma~2.1]{GriLo2023} can be used in our Fr\'echet setting. Recall first that a topological space is called {\em Lindel\"of} if every open cover of the space admits a countable subcover, and that a topological space is called {\em hereditarily Lindel\"of} if every subspace of it is Lindel\"of.

\begin{lemma}\label{Lem:top.conditions}
	Let $(Y,\tau_Y)$ be a Hausdorff locally convex topological vector space, denote its strong dual space by $(X,\tau_X):=(Y',\beta(Y',Y))$, and let $T:X\longrightarrow X$ be a linear map. Then:
	\begin{enumerate}[{\em(a)}]
		\item[{\em($\alpha$)}] $T$ is $\sigma(X,Y)$-$\sigma(X,Y)$-continuous if and only if it is the adjoint of some linear map $S:Y\longrightarrow Y$;
		
		\item[{\em($\beta$)}] $\sigma(X,Y) \subset \tau_X$.
	\end{enumerate}
	Moreover, if the Hausdorff locally convex space $(Y,\tau_Y)$ is separable then:
	\begin{enumerate}[{\em(a)}]
		\item[{\em($\gamma$)}] every $\sigma(X,Y)$-compact set is $\sigma(X,Y)$-metrizable;
	\end{enumerate}
	and if the Hausdorff locally convex space $(X,\tau_X)$ is hereditarily Lindel\"of then:
	\begin{enumerate}[{\em(a)}]
		\item[{\em($\delta$)}] $\Bi(X,\sigma(X,Y))=\Bi(X,\tau_X)$.
	\end{enumerate}
\end{lemma}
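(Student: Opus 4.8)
The plan is to verify the four duality facts $(\alpha)$--$(\delta)$ separately, each relying on standard locally convex space theory. For $(\alpha)$, I would prove both implications of the equivalence. If $T$ is the adjoint of some $S:Y\longrightarrow Y$, meaning $\ep{Su}{x}=\ep{u}{Tx}$ for all $(u,x)\in Y\times X$, then $\sigma(X,Y)$-continuity follows immediately: a net $x_\alpha\to x$ in $\sigma(X,Y)$ means $\ep{u}{x_\alpha}\to\ep{u}{x}$ for each $u\in Y$, whence $\ep{u}{Tx_\alpha}=\ep{Su}{x_\alpha}\to\ep{Su}{x}=\ep{u}{Tx}$, giving $Tx_\alpha\to Tx$ weakly. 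Conversely, if $T$ is $\sigma(X,Y)$-$\sigma(X,Y)$-continuous, then for each fixed $u\in Y$ the functional $x\mapsto\ep{u}{Tx}$ is $\sigma(X,Y)$-continuous and linear, so by the standard identification of the $\sigma(X,Y)$-dual of $X$ with $Y$, there exists a unique $Su\in Y$ with $\ep{u}{Tx}=\ep{Su}{x}$ for all $x$; linearity of $S$ in $u$ is routine. Condition $(\beta)$ is essentially immediate from the definitions: by Remark~\ref{Rem:Theorem.conditions}(c), every $\sigma(X,Y)$-neighbourhood of a point is a $\tau_X$-neighbourhood since the polars $E^{\circ}$ generating $\beta(Y',Y)$ include those where $E$ is finite, and these finite-set polars already generate $\sigma(X,Y)$; thus every $\sigma(X,Y)$-open set is $\tau_X$-open.

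For $(\gamma)$, I would use separability of $(Y,\tau_Y)$ to produce a countable family of continuous seminorms or, more directly, a countable $\tau_Y$-dense set $\{u_n\}_{n\geq1}\subset Y$. On any $\sigma(X,Y)$-compact set $K$, the weak topology is the topology of pointwise convergence on $Y$, and by density (and continuity/boundedness on the compact set) it coincides with the topology of pointwise convergence on the countable set $\{u_n\}$. This latter topology is induced by the countable family of pseudometrics $(x,y)\mapsto|\ep{u_n}{x-y}|$, so it is pseudometrizable; since $K$ is Hausdorff (as $\sigma(X,Y)$ is Hausdorff, $Y$ separating points of $X=Y'$), the restriction is genuinely metrizable. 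The key point to check carefully is that pointwise convergence on the dense set $\{u_n\}$ agrees with pointwise convergence on all of $Y$ when restricted to $K$; this is where equicontinuity or uniform boundedness on the compact set $K$ enters.

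For $(\delta)$, the inclusion $\Bi(X,\sigma(X,Y))\subset\Bi(X,\tau_X)$ is immediate from $(\beta)$. The reverse inclusion is the substantive direction: I must show every $\tau_X$-open set is $\sigma(X,Y)$-Borel. The standard strategy is to write an arbitrary $\tau_X$-open set $U$ as a countable union of $\sigma(X,Y)$-Borel (indeed $\sigma(X,Y)$-closed) sets. Here the hereditarily Lindel\"of hypothesis on $(X,\tau_X)$ does the work: by Remark~\ref{Rem:Theorem.conditions}(c), each point $x\in U$ has a $\tau_X$-neighbourhood basis of $\sigma(X,Y)$-closed sets, so for each $x\in U$ we may pick a $\tau_X$-open set $V_x$ with $x\in V_x$ and $\cl{V_x}^{\sigma(X,Y)}\subset U$ (using a $\sigma(X,Y)$-closed basic neighbourhood inside $U$). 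The family $\{V_x\}_{x\in U}$ is a $\tau_X$-open cover of $U$; since $U$, as a subspace, is Lindel\"of, it admits a countable subcover $\{V_{x_k}\}_{k\geq1}$, and then $U=\bigcup_k V_{x_k}\subset\bigcup_k\cl{V_{x_k}}^{\sigma(X,Y)}\subset U$, exhibiting $U$ as a countable union of $\sigma(X,Y)$-closed sets, hence $\sigma(X,Y)$-Borel.

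I expect the main obstacle to be $(\delta)$, specifically the careful deployment of the hereditarily Lindel\"of property together with the mixed-topology neighbourhood structure: one must ensure that the $\sigma(X,Y)$-closed basic neighbourhoods guaranteed by the strong-topology definition can be shrunk to sit inside an arbitrary given $\tau_X$-open set while still covering it, and that the Lindel\"of reduction applies to the subspace $U$ rather than to $X$ globally (which is precisely why \emph{hereditarily} Lindel\"of, not merely Lindel\"of, is assumed). A secondary technical point, arising in $(\gamma)$, is justifying that pointwise convergence on a countable dense subset of $Y$ determines $\sigma(X,Y)$-convergence on compacta, which I would handle via the equicontinuity afforded by the quasi-$\ell_\infty$-barrelledness or by a direct uniform-boundedness argument on the compact set.
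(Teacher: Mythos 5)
Your overall route coincides with the paper's: ($\alpha$) and ($\beta$) are the standard duality facts, ($\gamma$) is metrizability of weak compacta via a countable $\tau_Y$-dense set $\{u_k\}_{k\in\NN}\subset Y$ (the paper writes down the explicit metric $d(x,y)=\sum_k 2^{-k}\min\{1,|\ep{u_k}{x-y}|\}$), and your proof of ($\delta$) is exactly the paper's argument: cover a $\tau_X$-open $U$ by $\sigma(X,Y)$-closed $\tau_X$-neighbourhoods contained in $U$ (available by the polar description of $\beta(Y',Y)$), extract a countable subcover by the hereditarily Lindel\"of hypothesis, and conclude that $U$ is a countable union of $\sigma(X,Y)$-closed sets.

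The one point to correct is your justification of the ``key point'' in ($\gamma$). You propose to show that pointwise convergence on $\{u_n\}$ agrees with $\sigma(X,Y)$-convergence on a compact set $K$ via ``the equicontinuity afforded by the quasi-$\ell_\infty$-barrelledness or by a direct uniform-boundedness argument''. Neither tool is available here: quasi-$\ell_\infty$-barrelledness of $(Y,\tau_Y)$ is a hypothesis of Theorem~\ref{The:lbo->measure}, not of Lemma~\ref{Lem:top.conditions}, and a uniform-boundedness principle on $Y$ would likewise require a barrelledness assumption the lemma does not make; moreover $\sigma(X,Y)$-compact sets need not be equicontinuous in this generality. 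No such input is needed. The topology of pointwise convergence on $\{u_n\}$ is Hausdorff on $X=Y'$, because each $x\in X$ is a $\tau_Y$-continuous functional on $Y$, so if $\ep{u_n}{x-y}=0$ for all $n$ then $x-y$ vanishes on a dense subset of $Y$ and hence on all of $Y$. It is also coarser than $\sigma(X,Y)$. A coarser Hausdorff topology on a compact space coincides with the given one (a continuous bijection from a compact space onto a Hausdorff space is a homeomorphism), so on $K$ the two topologies agree and $K$ is metrized by the countable family of pseudometrics you describe. With this replacement your argument for ($\gamma$) closes, and the rest of the proposal stands as written.
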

\begin{proof}
	Property ($\alpha$) is well-known (see \cite[Section~8.6]{Jarchow1981_book}) and ($\beta$) follows from the definition of $\sigma(X,Y)$ and $\tau_X$. Property ($\gamma$) is also known since for any $\tau_Y$-dense countable set $\{u_k : k \in\NN\} \subset Y$ the function
	\[
	\textstyle d(x,y) := \sum_{k\in\NN} 2^{-k} \cdot \min\{1,|\ep{u_k}{x-y}|\} \quad \text{ for each } (x,y) \in X\times X,
	\]
	can be checked to be a metric defining the weak topology on each $\sigma(X,Y)$-compact subset. For ($\delta$) note first that $\Bi(X,\sigma(X,Y)) \subset \Bi(X,\tau_X)$ by ($\beta$). Conversely, let $U \in \tau_X$ and for each $x \in U$ select a $\sigma(X,Y)$-closed $\tau_X$-neighbourhood $U_x$ of $x$ such that $U_x \subset U$. Thus, $U = \bigcup_{x \in U} U_x$ is a covering of $U$. If now $(X,\tau_X)$ is hereditarily Lindel\"of we can obtain a countable sub-covering of $U$ formed by $\sigma(X,Y)$-closed sets, which finally implies that $U \in \Bi(X,\sigma(X,Y))$.
\end{proof}

We can now proceed with the proof of Theorem~\ref{The:lbo->measure}: let $(Y,\tau_Y)$ be a quasi-$\ell_{\infty}$-barrelled Hausdorff locally convex topological vector space, assume that its strong dual space $(X,\tau_X):=(Y',\beta(Y',Y))$ is a separable Fr\'echet space, and let $T \in \Lc(X)$ be the adjoint of some linear map $S:Y\longrightarrow Y$.

Since the space $(X,\tau_X)$ is assumed to be separable and metrizable one can check that $(Y,\tau_Y)$ is also separable, and that $(X,\tau_X)$ is hereditarily Lindel\"of. Hence Lemma~\ref{Lem:top.conditions} applies to our situation in its full generality and we have that:

\begin{claim}\label{Claim_1}
	Given $x_0 \in X$ and a $\sigma(X,Y)$-compact set $K \subset X$ with $\Bdsup(\Nc_T(x_0,K))>0$, there exists a $T$-invariant probability measure $\mu$ on $(X,\Bi(X))$ such that $\mu(K)>0$. Moreover, we have the inclusion
	\[
	\supp(\mu) \subset \cl{\orb_T(x_0)}^{\sigma(X,Y)}.
	\]
\end{claim}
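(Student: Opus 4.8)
The plan is to deduce the claim directly from the imported result \cite[Lemma~2.1]{GriLo2023} by taking the weak topology $\tau:=\sigma(X,Y)$ as the auxiliary Hausdorff topology. First I would record that its four hypotheses ($\alpha$)--($\delta$) are met in our setting: they are precisely the conclusions of Lemma~\ref{Lem:top.conditions}, whose separability and hereditary-Lindel\"of requirements hold because $(X,\tau_X)$ is separable and metrizable, as noted immediately before the claim. Consequently, for every $x_0\in X$ and every Banach limit $\mf:\ell^{\infty}\longrightarrow\RR$ there is a non-negative $T$-invariant measure $\nu$ on $(X,\Bi(X))$ with $\nu(X)\le 1$, with $\nu(K)\ge\mf(\1_{\Nc_T(x_0,K)})$ for every $\sigma(X,Y)$-compact set $K$, and with $\supp(\nu)\subset\cl{\orb_T(x_0)}^{\sigma(X,Y)}$.

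The decisive step is to select the Banach limit so as to activate the density hypothesis. Here I would invoke the standard fact that every set $A\subset\NN$ admits a Banach limit $\mf$ with $\mf(\1_A)=\Bdsup(A)$; this follows from the Hahn--Banach theorem applied to the shift-invariant sublinear functional $p(x):=\lim_{N\to\infty}\sup_{m\ge0}\frac1N\sum_{k=m+1}^{m+N}x_k$, whose limit exists by subadditivity (Fekete) and which satisfies $p(\1_A)=\Bdsup(A)$, exactly as in the Banach-space argument of \cite{GriLo2023} recalled above. Choosing such a Banach limit for the set $A=\Nc_T(x_0,K)$ and feeding it into the previous paragraph yields a measure $\nu$ with $\nu(K)\ge\Bdsup(\Nc_T(x_0,K))>0$.

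It remains to normalise. Since $K\subset X$ we have $\nu(X)\ge\nu(K)>0$, so $\mu:=\nu/\nu(X)$ is a genuine $T$-invariant probability measure; scaling by the positive constant $1/\nu(X)$ preserves both invariance and the support and leaves $\mu(K)=\nu(K)/\nu(X)>0$, which gives all the assertions. I do not anticipate a real obstacle, since the substantive content has already been isolated in Lemma~\ref{Lem:top.conditions} and the cited \cite[Lemma~2.1]{GriLo2023}; the only delicate points are verifying that $\sigma(X,Y)$ genuinely meets condition~($\delta$) in the non-Banach regime, where hereditary Lindel\"ofness substitutes for the second countability available when $X$ is Banach, and guaranteeing $\nu(X)>0$ before normalising, which is exactly what the strict inequality $\nu(K)>0$ secures.
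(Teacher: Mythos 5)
Your proposal is correct and takes essentially the same route as the paper: verify conditions ($\alpha$)--($\delta$) for $\tau=\sigma(X,Y)$ via Lemma~\ref{Lem:top.conditions}, produce a Banach limit $\mf$ with $\mf(\1_{\Nc_T(x_0,K)})=\Bdsup(\Nc_T(x_0,K))>0$, and apply \cite[Lemma~2.1]{GriLo2023}. The only (harmless) difference is that you obtain the Banach limit by Hahn--Banach against the shift-invariant sublinear functional $p$, whereas the paper uses the explicit ultrafilter construction recorded in Proposition~\ref{Pro:BDsup=BL}; your final normalisation step is likewise implicit in the paper's statement.
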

\begin{proof}
	In \cite[Fact~2.3.1]{GriLo2023} it was shown that for any set $A \subset \NN$ one can construct a Banach limit $\mf_A:\ell^{\infty}\longrightarrow \RR$ such that $\mf_A(\1_A)=\Bdsup(A)$. We repeat the explicit construction of such a Banach limit in Proposition~\ref{Pro:BDsup=BL} below, where we discuss the optimality of this construction (see Subsection~\ref{SubSec_3.1:applications+optimality}). Hence there exists a Banach limit $\mf_K:\ell^{\infty}\longrightarrow \RR$ such that $\mf_K(\1_{\Nc_T(x_0,K)}) = \Bdsup(\Nc_T(x_0,K)) > 0$ for the $\sigma(X,Y)$-compact set $K$. By Lemma~\ref{Lem:top.conditions} we can now apply \cite[Lemma~2.1]{GriLo2023} to $x_0$ and $\mf_K$ obtaining a $T$-invariant positive finite measure $\mu$ on $(X,\Bi(X))$ for which $\mu(K) \geq \mf_K(\1_{\Nc_T(x_0,K)}) > 0$.
\end{proof}

Now we use the ``locally bounded orbit''-assumption:

\begin{claim}\label{Claim_2}
	Given a vector $x_0 \in \RRec(T) \cap \lbo(T)$ there exists a $T$-invariant probability measure $\mu_{x_0}$ on $(X,\Bi(X))$ such that
	\[
	x_0 \in \supp(\mu_{x_0}) \subset \cl{\orb_T(x_0)}^{\sigma(X,Y)}.
	\]
	In particular, if $x_0\neq 0$ then $\mu_{x_0}$ is a non-trivial $T$-invariant measure.
\end{claim}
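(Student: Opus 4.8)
The plan is to feed Claim~\ref{Claim_1} a family of $\sigma(X,Y)$-compact sets manufactured from the local boundedness of the orbit, and then to glue the resulting measures into a single one whose support captures $x_0$. First I would use the assumption $x_0 \in \lbo(T)$ to fix a neighbourhood $W$ of $x_0$ with $W \cap \orb_T(x_0)$ bounded in $(X,\tau_X)$. Since $(X,\tau_X)$ is a (metrizable) Fr\'echet space and, by Remark~\ref{Rem:Theorem.conditions}(c), every point admits a basis of $\tau_X$-neighbourhoods consisting of $\sigma(X,Y)$-closed sets, I would choose a countable $\tau_X$-neighbourhood basis $\{U_j\}_{j\in\NN}$ of $x_0$ with each $U_j$ being $\sigma(X,Y)$-closed and contained in $W$.

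Next, for each $j$ I would turn $U_j$ into a usable compact set. The set $U_j \cap \orb_T(x_0) \subseteq W \cap \orb_T(x_0)$ is bounded, and it is countable because $\orb_T(x_0)$ is. Enumerating it as a bounded sequence in the strong dual $X=Y'$ and invoking the quasi-$\ell_{\infty}$-barrelledness of $(Y,\tau_Y)$, this set is equicontinuous, hence contained in the polar $V^{\circ}$ of some $\tau_Y$-neighbourhood $V$ of $0_Y$; by the Alaoglu--Bourbaki theorem $V^{\circ}$ is $\sigma(X,Y)$-compact, so the set
\[
K_j := \cl{U_j \cap \orb_T(x_0)}^{\sigma(X,Y)}
\]
is $\sigma(X,Y)$-compact. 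Because $U_j$ is $\sigma(X,Y)$-closed we also obtain the crucial inclusion $K_j \subseteq U_j$. Moreover $T^nx_0 \in U_j$ forces $T^nx_0 \in K_j$, so $\Nc_T(x_0,U_j) \subseteq \Nc_T(x_0,K_j)$, and since $x_0 \in \RRec(T)$ we get $\Bdsup(\Nc_T(x_0,K_j)) \geq \Bdsup(\Nc_T(x_0,U_j)) > 0$. Claim~\ref{Claim_1} then yields a $T$-invariant probability measure $\mu_j$ with $\mu_j(K_j) > 0$ and $\supp(\mu_j) \subseteq \cl{\orb_T(x_0)}^{\sigma(X,Y)}$.

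Finally I would set $\mu_{x_0} := \sum_{j\in\NN} 2^{-j}\mu_j$. This is again a $T$-invariant probability measure, and since $\cl{\orb_T(x_0)}^{\sigma(X,Y)}$ is $\tau_X$-closed (as $\sigma(X,Y) \subseteq \tau_X$ by Lemma~\ref{Lem:top.conditions}($\beta$)), the support of a non-negative countable combination gives $\supp(\mu_{x_0}) \subseteq \cl{\orb_T(x_0)}^{\sigma(X,Y)}$. To see that $x_0 \in \supp(\mu_{x_0})$, take any $\tau_X$-neighbourhood $U$ of $x_0$ and pick $j$ with $U_j \subseteq U$; then, using $K_j \subseteq U_j \subseteq U$,
\[
\mu_{x_0}(U) \geq 2^{-j}\mu_j(U) \geq 2^{-j}\mu_j(K_j) > 0,
\]
so $\mu_{x_0}(U)>0$ for every neighbourhood of $x_0$. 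When $x_0 \neq 0$ this forces $\supp(\mu_{x_0}) \neq \{0\}$, whence $\mu_{x_0} \neq \delta_0$ and the measure is non-trivial.

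The main obstacle is the step that converts the merely \emph{bounded} set $W \cap \orb_T(x_0)$ into a $\sigma(X,Y)$-\emph{compact} one: this is exactly where the local boundedness of the orbit and the quasi-$\ell_{\infty}$-barrelledness hypothesis combine (boundedness $\Rightarrow$ equicontinuity $\Rightarrow$ weak-$*$ compactness), replacing the role played by norm-balls in the Banach setting of Corollary~\ref{Cor:Banach->measure}. The subsequent gluing, together with the inclusion $K_j \subseteq U_j$ (which hinges on choosing the $U_j$ to be $\sigma(X,Y)$-closed), are the routine but essential ingredients that promote the single-set estimate $\mu_j(K_j)>0$ to the support statement $x_0 \in \supp(\mu_{x_0})$.
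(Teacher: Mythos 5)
Your proposal is correct and follows essentially the same route as the paper's own proof: local boundedness plus quasi-$\ell_{\infty}$-barrelledness gives equicontinuity, Alaoglu--Bourbaki gives relative $\sigma(X,Y)$-compactness, the $\sigma(X,Y)$-closed neighbourhood basis guarantees $K_j \subset U_j$, Claim~\ref{Claim_1} is applied to each $K_j$, and the measures are glued via $\sum_j 2^{-j}\mu_j$. The only cosmetic difference is that the paper intersects a generic $\sigma(X,Y)$-closed basis $(U_n)$ with the bounding neighbourhood $U$ to form $(V_n)$, whereas you choose the basis inside $W$ from the outset; the two are interchangeable.
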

\begin{proof}
	Since the vector $x_0$ has a locally bounded orbit for $T$ there exists a $\tau_X$-neighbourhood $U$ of $x_0$ such that $K := U \cap \orb_T(x_0)$ is a countable $\tau_X$-bounded set and hence an equicontinuous set in $Y'$ by the quasi-$\ell_{\infty}$-barrelled assumption on $(Y,\tau_Y)$. Moreover, we have that $K = U \cap \orb_T(x_0)$ is relatively $\sigma(X,Y)$-compact by the Alaoglu-Bourbaki theorem (see \cite[Section~8.5, Theorem~2]{Jarchow1981_book}).
	
	Let $(U_n)_{n\in\NN}$ be a basis of $\sigma(X,Y)$-closed $\tau_X$-neighbourhoods of $x_0$ and let $(V_n)_{n\in\NN} := (U_n \cap U)_{n\in\NN}$, which is again a basis of $\tau_X$-neighbourhoods of $x_0$. Set
	\[
	K_n := \cl{V_n \cap \orb_T(x_0)}^{\sigma(X,Y)} = \ \cl{U_n \cap K}^{\sigma(X,Y)} \subset \ U_n \cap \cl{K}^{\sigma(X,Y)} \quad \text{ for each } n \in \NN.
	\]
	Note that every $K_n$ is a $\sigma(X,Y)$-compact set included in $U_n$ for which
	\[
	\Nc_T(x_0,V_n) = \Nc_T(x_0,V_n \cap \orb_T(x_0)) \subset \Nc_T(x_0,K_n) \subset \Nc_T(x_0,U_n).
	\]
	Hence we can apply \textbf{Claim~\ref{Claim_1}} to $x_0$ and each set $K_n$ obtaining a sequence $(\mu_n)_{n\in\NN}$ of $T$-invariant probability measures on $(X,\Bi(X))$ for which $\mu_n(U_n) \geq \mu_n(K_n) > 0$ and such that
	\[
	\supp(\mu_n) \subset \cl{\orb_T(x_0)}^{\sigma(X,Y)} \quad \text{ for each } n \in \NN.
	\]
	Then $\mu_{x_0} := \sum_{n\in\NN} 2^{-n} \cdot \mu_n$ is a $T$-invariant probability measure on $(X,\Bi(X))$ fulfilling that
	\[
	x_0 \in \supp(\mu_{x_0}) \subset \cl{\orb_T(x_0)}^{\sigma(X,Y)},
	\]
	which also implies that $\mu_{x_0}$ is non-trivial when $x_0\neq 0$ (see \cite[Fact~2.3.2]{GriLo2023} for more details).
\end{proof}

To prove the part of Theorem~\ref{The:lbo->measure} regarding the existence of a $T$-invariant measure with full support one can argue just as in \cite[Theorem~2.3]{GriLo2023}:

\begin{claim}
	If the set $\RRec(T) \cap \lbo(T)$ is dense in $X$ there exits a $T$-invariant probability measure $\mu$ on $(X,\Bi(X))$ with full support.
\end{claim}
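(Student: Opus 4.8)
The plan is to reduce the full-support assertion to countably many invocations of \textbf{Claim~\ref{Claim_2}} and then to average the resulting measures with summable weights. Since $(X,\tau_X)$ is a separable metrizable space it is second countable, hence hereditarily separable, so the dense set $\RRec(T)\cap\lbo(T)$ admits a countable subset $\{x_k : k\in\NN\}$ which is dense in $\RRec(T)\cap\lbo(T)$ for the subspace topology. As $\RRec(T)\cap\lbo(T)$ is itself assumed dense in $X$, the chain $X=\cl{\RRec(T)\cap\lbo(T)}^{\tau_X}\subset\cl{\{x_k:k\in\NN\}}^{\tau_X}$ shows that $\{x_k:k\in\NN\}$ is in fact dense in the whole space $X$.

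Next I would apply \textbf{Claim~\ref{Claim_2}} to each $x_k$, obtaining for every $k\in\NN$ a $T$-invariant probability measure $\mu_{x_k}$ on $(X,\Bi(X))$ with $x_k\in\supp(\mu_{x_k})$, and then set
\[
\mu := \sum_{k\in\NN} 2^{-k}\cdot \mu_{x_k}.
\]
This is a well-defined non-negative Borel measure (countable additivity follows from interchanging the two non-negative sums), it is a probability measure because the weights satisfy $\sum_{k\in\NN}2^{-k}=1$, and it is automatically regular since we work on a Polish space. Its $T$-invariance is inherited from that of each $\mu_{x_k}$: for every $A\in\Bi(X)$ one has $\mu(T^{-1}(A))=\sum_{k\in\NN}2^{-k}\mu_{x_k}(T^{-1}(A))=\sum_{k\in\NN}2^{-k}\mu_{x_k}(A)=\mu(A)$.

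Finally I would check that $\mu$ has full support. Given any non-empty $\tau_X$-open set $U\subset X$, density of $\{x_k:k\in\NN\}$ yields an index $k$ with $x_k\in U$; since $U$ is then an open neighbourhood of $x_k$ and $x_k\in\supp(\mu_{x_k})$, we get $\mu_{x_k}(U)>0$, whence $\mu(U)\geq 2^{-k}\cdot\mu_{x_k}(U)>0$. As $U$ was arbitrary, $\mu(U)>0$ for every non-empty open set, which is precisely the statement $\supp(\mu)=X$.

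The only point requiring genuine care is topological rather than measure-theoretic: one must guarantee a countable subset of $\RRec(T)\cap\lbo(T)$ that is dense in all of $X$, and this is exactly where the hereditary separability of the separable metrizable space $X$ enters. Everything else is routine, namely that a countable convex combination of $T$-invariant probability measures is again a $T$-invariant probability measure and that $\supp(\mu)$ contains $\supp(\mu_{x_k})$ for each $k$; so I expect no substantial obstacle beyond these standard verifications.
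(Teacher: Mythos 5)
Your proposal is correct and follows exactly the paper's argument: take a countable dense subset of $\RRec(T)\cap\lbo(T)$ (which is automatically dense in $X$ by hereditary separability of the separable metrizable space), apply \textbf{Claim~\ref{Claim_2}} to each point, and form the convex combination $\mu=\sum_{k\in\NN}2^{-k}\mu_{x_k}$. The verifications of invariance, total mass and full support that you spell out are precisely the routine checks the paper leaves implicit.
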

\begin{proof}
	Given any countable dense subset $\{x_k \ ; \ k\in\NN\} \subset \RRec(T) \cap \lbo(T)$ and applying \textbf{Claim~\ref{Claim_2}} to each vector $x_k$ we obtain a sequence $(\mu_{x_k})_{k\in\NN}$ of $T$-invariant probability measures on $(X,\Bi(X))$ such that $x_k \in \supp(\mu_{x_k})$ for each $k \in \NN$, and $\mu := \sum_{k\in\NN} 2^{-k} \cdot \mu_{x_k}$ fulfills the desired properties.
\end{proof}

Finally, and in order to complete the proof of Theorem~\ref{The:lbo->measure}, let us argue what happens for reflexive spaces. Recall first that a Fr\'echet space $(X,\tau_X)$ is called {\em reflexive} if the canonical inclusion from $X$ into its strong bi-dual space $X''$ is an isomorphism, that is, the linear map
\[
J:(X,\tau_X)\longrightarrow (X'',\beta(X'',X'))
\]
where given $x \in X$ the map $J(x) : X' \longrightarrow \KK$ acts as $[J(x)](u) = \ep{J(x)}{u} := \ep{u}{x}$ for all $u \in X'$, and where $\beta(X'',X')$ is the strong topology induced on $X''$ by the locally convex space $(X',\beta(X',X))$. Since the strong dual of a Fr\'echet space is always a (DF)-space, in the previous situation we have that $(Y,\tau_Y) := (X',\beta(X',X))$ is a (DF)-space; see \cite[Section~12.4]{Jarchow1981_book} and \cite[Chapter~23]{MeisVogt1997_book} for more details.

\begin{claim}
	The conclusion of Theorem~\ref{The:lbo->measure} holds for every operator $T \in \Lc(X)$ with respect to the weak topology $\sigma(X,X')$ as soon as $(X,\tau_X)$ is a separable reflexive Fr\'echet space.
\end{claim}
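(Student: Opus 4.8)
The plan is to realise $(X,\tau_X)$ as the strong dual of its own strong dual and then invoke Theorem~\ref{The:lbo->measure} verbatim. Concretely, I would set $(Y,\tau_Y) := (X',\beta(X',X))$, which by the discussion preceding the claim is a (DF)-space since it is the strong dual of the Fréchet space $X$. Reflexivity of $(X,\tau_X)$ is precisely the statement that the canonical map $J:(X,\tau_X)\to(X'',\beta(X'',X'))=(Y',\beta(Y',Y))$ is an isomorphism, so that $(X,\tau_X)$ is identified with the strong dual of $(Y,\tau_Y)$ and $Y'=X$. Under this identification the weak topology $\sigma(X,Y)$ appearing in Theorem~\ref{The:lbo->measure} becomes $\sigma(X,X')$, which is exactly the topology named in the statement. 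It then remains only to verify, for this choice of $(Y,\tau_Y)$, the three hypotheses of Theorem~\ref{The:lbo->measure}.

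For the barrelledness hypothesis I would argue that $(Y,\tau_Y)$ is quasi-$\ell_\infty$-barrelled. Its strong dual $Y'=X$ is separable by assumption, whence $Y$ itself is separable; being a (DF)-space, $(Y,\tau_Y)$ is then quasi-barrelled exactly as recorded in Remark~\ref{Rem:Theorem.conditions}(a) (via \cite[Corollary~8.2.20]{PeBo1987_book}). Since the range of any bounded sequence is a bounded set, quasi-barrelledness immediately forces quasi-$\ell_\infty$-barrelledness. The second hypothesis, that $(Y',\beta(Y',Y))$ be a separable Fréchet space, is then immediate: by reflexivity it is $(X,\tau_X)$, which is separable and Fréchet by assumption.

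The remaining point, and the only place where reflexivity is genuinely used, is the adjoint hypothesis: I must exhibit every $T\in\Lc(X)$ as the adjoint of a linear map $S:Y\to Y$. Here I would take $S$ to be the transpose of $T$, namely $Su := u\circ T$ for $u\in X'=Y$; this is a well-defined linear self-map of $Y$ precisely because $T$ is $\tau_X$-continuous, so that $u\circ T\in X'$. By construction $\ep{Su}{x}=(u\circ T)(x)=\ep{u}{Tx}$ for every $(u,x)\in Y\times X$, which is exactly the dual-evaluation equality of Remark~\ref{Rem:Theorem.conditions}(d) identifying $T$ as the adjoint of $S$. With the three hypotheses in place, Theorem~\ref{The:lbo->measure} applies directly and delivers both the single-vector statement for $x_0\in\RRec(T)\cap\lbo(T)$ and the full-support statement when $\RRec(T)\cap\lbo(T)$ is dense.

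I expect the main (and essentially only) obstacle to be bookkeeping rather than mathematical depth: one must check that the reflexivity isomorphism $J$ transports $\beta(X'',X')$ back to $\tau_X$ and $\sigma(X'',X')$ back to $\sigma(X,X')$ consistently, so that the measure produced by Theorem~\ref{The:lbo->measure} — supported in the $\sigma(X,Y)$-closure of the orbit — is indeed supported in $\cl{\orb_T(x_0)}^{\sigma(X,X')}$ as asserted. No estimate beyond this routine duality verification is required.
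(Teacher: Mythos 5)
Your proposal is correct and follows essentially the same route as the paper: realise $(X,\tau_X)$ as the strong dual of $(Y,\tau_Y):=(X',\beta(X',X))$ via reflexivity, check that $Y$ is a separable (DF)-space and hence quasi-$\ell_\infty$-barrelled, and exhibit $T$ as the adjoint of its transpose so that Theorem~\ref{The:lbo->measure} applies with $\sigma(X,Y)=\sigma(X,X')$. Your explicit verification of the dual-evaluation equality and of the passage from quasi-barrelledness to quasi-$\ell_\infty$-barrelledness only spells out details the paper delegates to Remark~\ref{Rem:Theorem.conditions}.
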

\begin{proof}
	Let $(X,\tau_X)$ be a separable reflexive Fr\'echet space, consider any operator $T \in \Lc(X)$ and denote by $(Y,\tau_Y) := (X',\beta(X',X))$ the strong dual space of $(X,\tau_X)$. Note that the strong dual of $(Y,\tau_Y)$, that is $(Y',\beta(Y',Y)) = (X'',\beta(X'',X'))$, coincides with $(X,\tau_X)$ by reflexivity. Hence:
	\begin{enumerate}[(i)]
		\item The space $(Y,\tau_Y)$ is a (DF)-space and it is separable because so is its strong dual. Then $(Y,\tau_Y)$ is a quasi-$\ell_{\infty}$-barrelled Hausdorff locally convex topological vector space (check Remark~\ref{Rem:Theorem.conditions}).
		
		\item The strong dual of $(Y,\tau_X)$ coincides with the separable Fr\'echet space $(X,\tau_X)$.
		
		\item The operator $T \in \Lc(X)$ can be seen as the adjoint of the linear map $S:Y\longrightarrow Y$ defined as
		\[
		Su \in Y \text{ such that } \ep{Su}{x} := \ep{u}{Tx} \text{ for every } x \in X.
		\]
		Indeed, the linear map $S$ is the adjoint of $T$, and $T$ coincides with the adjoint of $S$ by reflexivity.
	\end{enumerate}
	By (i), (ii) and (iii) we have that the initial hypothesis of Theorem~\ref{The:lbo->measure} are fulfilled by $(Y,\tau_Y)$, $(X,\tau_X)$ and $T \in\Lc(X)$, so that the conclusion holds. Moreover, note that the corresponding weak topology that appears in the statement $\sigma(X'',X') = \sigma(Y',Y)$ coincides with $\sigma(X,X')$ by reflexivity.
\end{proof}

\subsection{Remarks on Theorem~\ref{The:lbo->measure}}\label{SubSec_2.3:remarks}

Let us include here some examples where Theorem~\ref{The:lbo->measure} can be applied:

\begin{example}[\textbf{Reflexive Fr\'echet spaces}]
	The ``reflexive'' hypothesis is not too restrictive since plenty of interesting Fr\'echet spaces are reflexive. Moreover, the positive part of considering a reflexive space is that we have no restriction on the operators to which we can apply Theorem~\ref{The:lbo->measure}.
	
	Among these spaces we have the important class of {\em Fr\'echet-Montel spaces} like the space of all holomorphic functions $H(\Omega)$ on any open connected subset $\Omega \subset \CC$ equipped with the compact-open topology, also the space of smooth functions $C^{\infty}(\Omega)$ on any open subset $\Omega \subset \RR^n$ equipped with the compact-open topology in all derivatives, or the space of all (real or complex) sequences $\omega=\KK^{\NN}$ endowed with its usual coordinatewise convergence topology.
	
	We refer to \cite[Chapter~11]{Jarchow1981_book} for more on reflexivity.
\end{example}

\begin{example}[\textbf{The case of (DF)-spaces with separable dual}]
	Apart from the reflexive setting, and as pointed out in Remark~\ref{Rem:Theorem.conditions}, the conclusion of Theorem~\ref{The:lbo->measure} also holds when we start considering a (DF)-space $(Y,\tau_Y)$ with separable strong dual and we pick an adjoint operator in that dual space. An important class of spaces fulfilling this property are the {\em (LB)-spaces} with separable dual.
	
	For instance one can consider the so-called {\em K\"othe sequence spaces} (see \cite[Chapter~27]{MeisVogt1997_book}). Indeed, for every {\em K\"othe matrix} $A = (a_{k,j})_{k,j \in \NN}$, as defined in \cite[Page~327]{MeisVogt1997_book} but also explicitly included in this notes (see Example~\ref{Ex:B.lambda} below), the respective {\em K\"othe space} $\lambda^p(A)$ is the strong dual space of an inductive limit of countably many Banach spaces for every $1\leq p\leq \infty$. In particular, when $1<p<\infty$ then $\lambda^p(A)$ is always a reflexive Fr\'echet space (see \cite[Proposition~27.3]{MeisVogt1997_book}) and, even though the spaces $\lambda^1(A)$ and $\lambda^{\infty}(A)$ are not necessarily reflexive (see \cite[Theorem~27.9: Dieudonn\'e-Gomes]{MeisVogt1997_book}), we at least have that $\lambda^1(A)$ is always separable and the strong dual space of an inductive limit of $c_0$-weighted spaces. Thus, Theorem~\ref{The:lbo->measure} applies to every continuous weighted backward shift acting on $\lambda^1(A)$.
\end{example}

In Section~\ref{Sec_4:lbo} we give explicit examples of locally bounded orbits for some classical operators acting on the already mentioned spaces $H(\CC)$, $\omega$ and $\lambda^p(A)$, but let us end this part of the paper with a possible generalization of Theorem~\ref{The:lbo->measure} for Polish dynamical systems:

\begin{remark}
	If one reads carefully the proof of Theorem~\ref{The:lbo->measure} it is obvious that the construction holds because we assume the existence of a reiteratively recurrent point admitting a basis of neighbourhoods whose intersection with the respective orbit is compact for the weak topology. We could give a similar definition in the general Polish dynamical systems setting:
	
	\begin{definition}
		Let $T:X\longrightarrow X$ be a continuous map acting on a Polish space $(X,\tau_X)$ and let $\tau$ be any Hausdorff topology on $X$. We say that a point $x \in X$ has a {\em locally $\tau$-compact orbit for $T$} if any of the following equivalent conditions holds:
		\begin{enumerate}[(i)]
			\item for each $\tau_X$-neighbourhood $U$ of $x$ there is a second $\tau_X$-neighbourhood $V$ of $x$ such that $\cl{V \cap \orb_T(x)}^{\tau}$ is a $\tau$-compact set contained in $U$;
			
			\item there exists a decreasing basis of $\tau_X$-neighbourhoods $(U_n)_{n\in\NN}$ of $x$ fulfilling that $\cl{U_{n+1} \cap \orb_T(x)}^{\tau}$ is a $\tau$-compact set contained in $U_n$ for every $n \in \NN$.
		\end{enumerate}
	\end{definition}
	
	Note that, when $(Y,\tau_Y)$ is a quasi-$\ell_{\infty}$-barrelled Hausdorff locally convex topological vector space whose strong dual $(X,\tau_X):=(Y',\beta(Y',Y))$ is a Fr\'echet space as in Theorem~\ref{The:lbo->measure}, then a vector $x \in X$ has a locally $\sigma(X,Y)$-compact orbit for an operator $T \in \Lc(X)$ if and only if $x \in \lbo(T)$. A completely similar proof to that of Theorem~\ref{The:lbo->measure} shows now the following extension of \cite[Theorem~2.3]{GriLo2023}:
	
	\begin{theorem}
		Let $(X,T)$ be a Polish dynamical system. Assume that $X$ is endowed with a Hausdorff topology $\tau$ which fulfills {\em($\alpha$)}, {\em($\beta$)}, {\em($\gamma$)} and {\em ($\delta$)} with respect to the original Polish topology $\tau_X$ on $X$. Given a point $x_0 \in \RRec(T)$ with a locally $\tau$-compact orbit for $T$ one can find a $T$-invariant probability measure $\mu_{x_0}$ on $(X,\Bi(X))$ such that
		\[
		x_0 \in \supp(\mu_{x_0}) \subset \cl{\orb_T(x_0)}^{\tau}.
		\]
		Moreover, if the set of reiteratively recurrent points with locally $\tau$-compact orbit for $T$ is dense in $X$, then there exits a $T$-invariant probability measure $\mu$ on $(X,\Bi(X))$ with full support.
	\end{theorem}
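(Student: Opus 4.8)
The plan is to mirror the three-step argument (Claims~\ref{Claim_1} and~\ref{Claim_2} together with the density claim) from the proof of Theorem~\ref{The:lbo->measure}, replacing the two ingredients that were specific to the strong-dual setting — namely the use of quasi-$\ell_{\infty}$-barrelledness together with the Alaoglu--Bourbaki theorem to pass from boundedness to weak compactness — by the direct hypothesis that $x_0$ has a locally $\tau$-compact orbit. Since conditions ($\alpha$)--($\delta$) are now assumed outright, \cite[Lemma~2.1]{GriLo2023} is available verbatim.

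First I would prove the exact analogue of Claim~\ref{Claim_1}: given $x_0 \in X$ and a $\tau$-compact set $K \subset X$ with $\Bdsup(\Nc_T(x_0,K))>0$, there is a $T$-invariant probability measure $\mu$ with $\mu(K)>0$ and $\supp(\mu) \subset \cl{\orb_T(x_0)}^{\tau}$. The proof is unchanged: one uses the Banach limit $\mf_K$ (constructed as recalled in Proposition~\ref{Pro:BDsup=BL}) satisfying $\mf_K(\1_{\Nc_T(x_0,K)}) = \Bdsup(\Nc_T(x_0,K)) > 0$, and feeds $x_0$ and $\mf_K$ into \cite[Lemma~2.1]{GriLo2023}, which under ($\alpha$)--($\delta$) yields an invariant finite measure with $\mu(X)\leq 1$, with $\mu(K) \geq \mf_K(\1_{\Nc_T(x_0,K)})>0$, and with the stated support bound; since $\mu(K)>0$ forces $\mu(X)>0$, normalising gives a probability measure (the support being unchanged).

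Next I would establish the analogue of Claim~\ref{Claim_2}. Using condition (ii) of the definition of locally $\tau$-compact orbit, fix a decreasing basis $(U_n)_{n\in\NN}$ of $\tau_X$-neighbourhoods of $x_0$ such that each $K_n := \cl{U_{n+1} \cap \orb_T(x_0)}^{\tau}$ is $\tau$-compact and contained in $U_n$. Since $x_0 \in \RRec(T)$ we have $\Bdsup(\Nc_T(x_0,U_{n+1}))>0$, and because $\Nc_T(x_0,U_{n+1}) = \Nc_T(x_0,U_{n+1}\cap\orb_T(x_0)) \subset \Nc_T(x_0,K_n)$, it follows that $\Bdsup(\Nc_T(x_0,K_n))>0$. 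Applying the first step to each $K_n$ produces $T$-invariant probability measures $\mu_n$ with $\mu_n(U_n) \geq \mu_n(K_n) > 0$ and $\supp(\mu_n) \subset \cl{\orb_T(x_0)}^{\tau}$; then $\mu_{x_0} := \sum_{n\in\NN} 2^{-n}\mu_n$ is a $T$-invariant probability measure with $\supp(\mu_{x_0}) \subset \cl{\orb_T(x_0)}^{\tau}$, and $x_0 \in \supp(\mu_{x_0})$ because every $\tau_X$-neighbourhood of $x_0$ contains some $U_n$, whence $\mu_{x_0}(U_n) \geq 2^{-n}\mu_n(U_n) > 0$. The final full-support statement then follows exactly as in the last claim of the proof of Theorem~\ref{The:lbo->measure}: pick a countable dense subset $\{x_k \ ; \ k\in\NN\}$ of the reiteratively recurrent points with locally $\tau$-compact orbit, apply the previous step to each $x_k$, and set $\mu := \sum_{k\in\NN} 2^{-k}\mu_{x_k}$.

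I do not expect a genuine obstacle, precisely because the definition of locally $\tau$-compact orbit was introduced to isolate the property of $\lbo(T)$ that drove the proof of Theorem~\ref{The:lbo->measure}. The only point requiring care is the verification, noted just before the statement, that in the strong-dual situation the two notions coincide, so that this theorem genuinely extends the earlier one; concretely, one must check that boundedness of $U \cap \orb_T(x)$ plus quasi-$\ell_{\infty}$-barrelledness yields relative $\sigma(X,Y)$-compactness via Alaoglu--Bourbaki, which is exactly the computation already carried out inside the proof of Claim~\ref{Claim_2}. Everything else transfers mechanically.
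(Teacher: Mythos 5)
Your proposal is correct and follows essentially the same route as the paper, which states that the result is obtained by ``a completely similar proof to that of Theorem~\ref{The:lbo->measure}'': you correctly substitute the locally $\tau$-compact orbit hypothesis (via condition (ii) of the definition) for the boundedness-plus-Alaoglu--Bourbaki step, and the remaining three-claim structure (Banach limit feeding \cite[Lemma~2.1]{GriLo2023}, the weighted sum $\sum_n 2^{-n}\mu_n$ over a neighbourhood basis, and the countable dense family for full support) transfers verbatim. No gaps.
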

\end{remark}

\section{Applications, optimality, almost-$\Fc$-recurrence and chaos}\label{Sec_3:applications}

In this section we apply Theorem~\ref{The:lbo->measure} to certain linear dynamical systems $T \in \Lc(X)$ with many locally bounded orbits, obtaining for them a very strong equivalence between two generally distinguished recurrence notions. We also discuss the optimality of Theorem~\ref{The:lbo->measure} in terms of Banach limits and Furstenberg families, which implies the optimality of the announced equivalence. We finally use again the concept of {\em locally bounded orbit} to extend, from the Banach to the Fr\'echet setting, two results from the recent works \cite{CarMur2022_IEOT,CarMur2022_arXiv} regarding the notions of {\em almost-$\Fc$-recurrence} and {\em Devaney chaos}.

\subsection{Applications and optimality of the measures constructed}\label{SubSec_3.1:applications+optimality}

By Theorem~\ref{The:lbo->measure} we can construct invariant measures from every single reiteratively recurrent vector and then we can show the existence of many frequently recurrent points, which have a much stronger recurrent behaviour than that of reiterative recurrence (see \cite[Lemma~3.1]{GriLo2023}). These arguments were already exhibited in \cite[Theorem~1.3]{GriLo2023} for adjoint operators acting on dual Banach spaces but now we can obtain an extension of such a result in our ``dual Fr\'echet setting'':

\begin{proposition}\label{Pro:application}
	Let $(Y,\tau_Y)$ be a quasi-$\ell_{\infty}$-barrelled Hausdorff locally convex topological vector space, assume that its strong dual $(X,\tau_X):=(Y',\beta(Y',Y))$ is a separable Fr\'echet space, and let $T \in \Lc(X)$ be the adjoint of some linear map $S:Y\longrightarrow Y$. Then we have the inclusions
	\[
	\RRec(T) \cap \lbo(T) \subset \bigcup \{ \supp(\mu) \ ; \ \mu \textup{ is a $T$-invariant probability measure on } \Bi(X) \} \subset \cl{\FRec(T)}.
	\]
	In particular, this holds for every $T \in \Lc(X)$ as soon as $(X,\tau_X)$ is a separable reflexive Fr\'echet space.
\end{proposition}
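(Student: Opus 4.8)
The plan is to prove the two inclusions separately. The left-hand inclusion is an immediate byproduct of the main theorem, while the right-hand inclusion rests on the ergodic-theoretic fact recalled in the Introduction. For the left-hand inclusion $\RRec(T) \cap \lbo(T) \subset \bigcup \{ \supp(\mu) \}$, I would simply invoke \textbf{Claim~\ref{Claim_2}} from the proof of Theorem~\ref{The:lbo->measure}: given any $x_0 \in \RRec(T) \cap \lbo(T)$, that claim produces a $T$-invariant probability measure $\mu_{x_0}$ on $(X,\Bi(X))$ with $x_0 \in \supp(\mu_{x_0})$. Since $\mu_{x_0}$ is one of the measures indexing the union, we obtain $x_0 \in \bigcup \{ \supp(\mu) \}$, and the inclusion follows.

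For the right-hand inclusion $\bigcup \{ \supp(\mu) \} \subset \cl{\FRec(T)}$, I would fix an arbitrary $T$-invariant probability measure $\mu$ and prove $\supp(\mu) \subset \cl{\FRec(T)}$. The key input is \cite[Lemma~3.1]{GriLo2023}, recalled in the Introduction: since $(X,\tau_X)$ is separable and completely metrizable, $(X,\Bi(X))$ is a standard Borel space, so the Birkhoff pointwise ergodic theorem combined with the ergodic decomposition theorem applies to the measure-preserving system $(X,\Bi(X),\mu,T)$ and yields that $\mu$-almost every point is frequently recurrent for $T$. Concretely, there is a Borel set $N$ with $\mu(N)=0$ and $X \setminus \FRec(T) \subset N$.

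With this in hand, the conclusion is a routine support argument. Suppose, towards a contradiction, that some $x \in \supp(\mu)$ satisfies $x \notin \cl{\FRec(T)}$. As $\cl{\FRec(T)}$ is $\tau_X$-closed, there is a $\tau_X$-open neighbourhood $U$ of $x$ with $U \cap \FRec(T) = \varnothing$, whence $U \subset X \setminus \FRec(T) \subset N$ and therefore $\mu(U) \leq \mu(N) = 0$; this contradicts $x \in \supp(\mu)$, since every neighbourhood of a support point has positive measure. Hence $\supp(\mu) \subset \cl{\FRec(T)}$, and taking the union over all $T$-invariant probability measures $\mu$ completes the inclusion. The ``in particular'' assertion then follows exactly as in the last claim of the proof of Theorem~\ref{The:lbo->measure}: when $(X,\tau_X)$ is a separable reflexive Fr\'echet space one sets $(Y,\tau_Y):=(X',\beta(X',X))$, so that the hypotheses of the proposition are satisfied and every $T \in \Lc(X)$ becomes an admissible adjoint operator.

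I expect the main (and essentially only nontrivial) obstacle to be the correct transfer of \cite[Lemma~3.1]{GriLo2023} to the present Fr\'echet framework, that is, verifying that the Birkhoff/ergodic-decomposition argument establishing the full $\mu$-measure of $\FRec(T)$ is unaffected when one replaces a Banach space by a separable Fr\'echet space; this hinges only on $(X,\Bi(X))$ being a standard Borel space, which holds here. Everything else is either cited directly from Theorem~\ref{The:lbo->measure} or a straightforward use of the definition of $\supp(\mu)$.
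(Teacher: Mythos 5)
Your proof is correct and follows essentially the same route as the paper: the left-hand inclusion is exactly Theorem~\ref{The:lbo->measure} (equivalently Claim~\ref{Claim_2}), and the right-hand inclusion is the citation of \cite[Lemma~3.1]{GriLo2023}, whose deduction from the $\mu$-a.e.\ statement you merely spell out via the standard support argument (and you correctly apply it to an arbitrary invariant measure, as the union requires). Your closing remark is also on point: that lemma is stated for Polish dynamical systems, so it applies verbatim in the separable Fr\'echet setting.
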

\begin{proof}
	Using Theorem~\ref{The:lbo->measure} we have that for any vector $x_0 \in \RRec(T) \cap \lbo(T)$ there is a $T$-invariant probability measure $\mu_{x_0}$ on $(X,\Bi(X))$ such that $x_0 \in \supp(\mu_{x_0})$. Using now \cite[Lemma~3.1]{GriLo2023} we get that $\supp(\mu_{x_0}) \subset \cl{\FRec(T)}$ and the result follows.
\end{proof}

If we guarantee that every reiteratively recurrent vector has a locally bounded orbit, then we recover the main results from \cite[Section~3]{GriLo2023} but in our more general ``dual Fr\'echet setting'':

\begin{corollary}\label{Cor:application}
	Let $(Y,\tau_Y)$ be a quasi-$\ell_{\infty}$-barrelled Hausdorff locally convex topological vector space, assume that its strong dual $(X,\tau_X):=(Y',\beta(Y',Y))$ is a separable Fr\'echet space, and let $T \in \Lc(X)$ be the adjoint of some linear map $S:Y\longrightarrow Y$. If we suppose that $\RRec(T) \subset \lbo(T)$ then
	\[
	\cl{\FRec(T)} = \cl{\RRec(T)}
	\]
	and, moreover, the next statements are equivalent:
	\begin{enumerate}[{\em(i)}]
		\item $T$ admits an invariant probability measure with full support;
		
		\item $T$ is frequently recurrent (that is, the set $\FRec(T)$ is dense in $X$);
		
		\item $T$ is reiteratively recurrent (that is, the set $\RRec(T)$ is dense in $X$).
	\end{enumerate}
	In particular, the inclusion $\RRec(T) \subset \lbo(T)$ holds if we assume any of the following conditions:
	\begin{enumerate}[--]
		\item the space $(X,\tau_X)$ is Banach (that is, $(X,\tau_X)$ is a locally bounded Fr\'echet space);
		
		\item or the operator $T \in \Lc(X)$ is power-bounded (that is, every $T$-orbit is bounded).
	\end{enumerate}
\end{corollary}
\begin{proof}
	By definition we always have the inclusion $\FRec(T) \subset \RRec(T)$ for every operator $T \in \Lc(X)$, and even for every Polish dynamical system $(X,T)$. The converse inclusion follows from Proposition~\ref{Pro:application} since we are assuming that $\RRec(T) \subset \lbo(T)$. Moreover, (i) $\Rightarrow$ (ii) follows from \cite[Lemma~3.1]{GriLo2023}, we have the equivalence (ii) $\Leftrightarrow$ (iii) since $\cl{\FRec(T)}=\cl{\RRec(T)}$, and (iii) $\Rightarrow$ (i) follows from Theorem~\ref{The:lbo->measure}.
	
	Note that if $(X,\tau_X)$ is a Banach space, or if $T \in \Lc(X)$ is a power-bounded operator, then we clearly have the inclusions $\RRec(T) \subset X \subset \lbo(T)$.
\end{proof}

We consider worth mentioning again that Corollary~\ref{Cor:application} contains \cite[Theorem~1.3]{GriLo2023}, which is the original Banach version of the result. Moreover, following the arguments employed in \cite{GriLo2023} one can prove extended versions of Proposition~\ref{Pro:application} and Corollary~\ref{Cor:application} for ``product'' and ``inverse'' linear dynamical systems. This was deeply studied in \cite[Sections~5 and 6]{GriLo2023} and we will not develop it further here.

Let us now focus on the optimality of the measures obtained in Section~\ref{Sec_2:invariant.measures}. First of all we have to mention that Theorem~\ref{The:lbo->measure} does not hold, and hence Proposition~\ref{Pro:application} and Corollary~\ref{Cor:application} are no longer true, outside the ``dual/reflexive setting'' described in Section~\ref{Sec_2:invariant.measures}. Indeed, in \cite[Section~5]{BoGrLoPe2022} there are explicit examples of linear dynamical systems, acting on non-dual spaces, that have plenty of reiteratively recurrent vectors (they have a co-meager and hence dense set of such vectors) but no non-zero frequently recurrent vector, so that the only invariant probability measure that these operators admit is the trivial Dirac delta $\delta_0$ (see \cite[Theorem~5.7 and Corollary~5.8]{BoGrLoPe2022}).

A second question regarding the optimality of Theorem~\ref{The:lbo->measure} is whether we can weaken or not the ``reiterative recurrence'' assumption. Indeed, if we could construct invariant measures from vectors presenting a weaker recurrent behaviour than that of reiterative recurrence, then Proposition~\ref{Pro:application} and also Corollary~\ref{Cor:application} would show the existence of frequently recurrent vectors but starting from a condition weaker than reiterative recurrence. We are about to show that we cannot find such a weaker property, but in order to give a complete answer to this question let us recall the following definitions already used and deeply studied in the works \cite{BesMePePu2016,BesMePePu2019,BoGr2018,BoGrLoPe2022,CarMur2022_IEOT,CarMur2022_MS,CarMur2022_arXiv,GriLo2023,GriLoPe2022,LoMe2023}:

\begin{definition}\label{Def:F-rec}
	If we denote by $\Part(\NN)$ the {\em power set} of the set of positive integers $\NN$, then:
	\begin{enumerate}[(a)]
		\item a collection of sets $\Fc \subset \Part(\NN)$ is called a {\em Furstenberg family} (or just a {\em family} for short) if $\varnothing \notin \Fc$ and for every $A \in \Fc$ the inclusion $A \subset B \subset \NN$ implies that $B\in \Fc$;
		
		\item and given an operator $T \in \Lc(X)$ and a Furstenberg family $\Fc \subset \Part(\NN)$, a vector $x\in X$ is called {\em $\Fc$-recurrent for $T$} if for every neighbourhood $U$ of $x$ the return set $\Nc_T(x,U) = \{ n \in \NN \ ; \ T^nx \in U \}$ belongs to $\Fc$. We will denote by $\Fc\Rec(T)$ the {\em set of $\Fc$-recurrent vectors for $T$}, and we will say that the operator $T$ is {\em $\Fc$-recurrent} whenever the set $\Fc\Rec(T)$ is dense in $X$.
	\end{enumerate}
\end{definition}

Note that {\em frequent} and {\em reiterative recurrence}, as defined in the Introduction of this paper, are two particular cases of {\em $\Fc$-recurrence} that appear precisely when $\Fc$ is chosen to be:
\begin{enumerate}[--]
	\item the {\em family of sets with positive lower density}, which will be denoted as in \cite{BesMePePu2019,CarMur2022_arXiv,GriLo2023} by
	\[
	\Dinf := \left\{ A \subset \NN \ ; \ \dinf(A) = \liminf_{N\to\infty} \frac{\# (A \cap [1,N])}{N} > 0 \right\};
	\]
	
	\item or the {\em family of sets with positive upper Banach density}, which will be denoted as in \cite{BesMePePu2019,CarMur2022_arXiv,GriLo2023} by
	\[
	\BDsup := \left\{ A \subset \NN \ ; \ \Bdsup(A) = \limsup_{N\to\infty} \left( \max_{m \geq 0} \frac{\# (A \cap [m+1,m+N])}{N} \right) > 0 \right\}.
	\]
\end{enumerate}

Therefore, our search for a property weaker than ``reiterative recurrence'', yet still enabling us to derive the conclusion of Theorem~\ref{The:lbo->measure}, can be formulated (and was implicitly asked by A. Avil\'es to the author of this paper in the context of the original Banach space result \cite[Theorem~1.3]{GriLo2023}) as follows:
\begin{enumerate}[--]
	\item {\em Is there any Furstenberg family $\Fc \subset \Part(\NN)$ fulfilling that $\BDsup \subsetneq \Fc$ and such that the conclusion of Theorem~\ref{The:lbo->measure} still holds for every vector $x_0 \in \Fc\Rec(T) \cap \lbo(T)$?}
\end{enumerate}
Recall that, in \textbf{Claim~\ref{Claim_1}} of Theorem~\ref{The:lbo->measure}, it is necessary that given a set $A \in \BDsup$ then one can find a Banach limit $\mf_A:\ell^{\infty}\longrightarrow \RR$ such that $\mf_A(\1_A)=\Bdsup(A)>0$, since this Banach limit is crucial to construct the strictly positive invariant measure required. Thus, the optimal form of Theorem~\ref{The:lbo->measure} in terms of Furstenberg families would appear if we replace $\BDsup$ by the ``apparently new'' family
\[
\BL := \left\{ A \subset \NN \ ; \ \text{there exists a Banach limit } \mf_A : \ell^{\infty} \longrightarrow \RR \text{ with } \mf_A(\1_A)>0 \right\}.
\]

\begin{proposition}\label{Pro:BDsup=BL}
	We have the following equality of Furstenberg families $\BDsup = \BL$.
\end{proposition}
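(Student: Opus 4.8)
The goal is to prove the equality $\BDsup = \BL$ of two Furstenberg families. The plan is to establish the two inclusions separately.

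For the inclusion $\BDsup \subset \BL$, I would take a set $A$ with $\Bdsup(A) = d > 0$ and construct a Banach limit $\mf_A$ witnessing $\mf_A(\1_A) > 0$. The key idea is to exploit the definition of upper Banach density as a limit over averaging windows $[m+1, m+N]$. Since $d > 0$, there exist sequences $N_k \to \infty$ and shifts $m_k \geq 0$ with $\frac{\#(A \cap [m_k+1, m_k+N_k])}{N_k} \to d$. For each bounded sequence $y = (y_n) \in \ell^\infty$ consider the averages $L_k(y) := \frac{1}{N_k}\sum_{j=1}^{N_k} y_{m_k + j}$, which form a bounded sequence in $k$. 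I would then compose a Banach limit $\widetilde{\mf}$ on $\ell^\infty$ (indexed by $k$) with the map $y \mapsto (L_k(y))_k$, setting $\mf_A(y) := \widetilde{\mf}((L_k(y))_k)$. The main point to verify is that $\mf_A$ is genuinely a Banach limit: linearity and positivity are immediate from the corresponding properties of $\widetilde{\mf}$ and the averaging; preservation of ordinary limits follows because if $y_n \to c$ then each $L_k(y) \to c$; and shift-invariance must be checked, which holds because shifting $y$ by one coordinate changes each window-average $L_k(y)$ by a term of size $O(1/N_k) \to 0$, so $\widetilde{\mf}$ does not see the difference. Finally $\mf_A(\1_A) = \widetilde{\mf}\big((L_k(\1_A))_k\big) = \widetilde{\mf}\big((\tfrac{\#(A \cap [m_k+1,m_k+N_k])}{N_k})_k\big) = d > 0$ since the inner sequence converges to $d$ and $\widetilde{\mf}$ respects limits.

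For the reverse inclusion $\BL \subset \BDsup$, I would argue contrapositively: suppose $A \notin \BDsup$, so $\Bdsup(A) = 0$; the task is to show that every Banach limit $\mf$ satisfies $\mf(\1_A) = 0$. The crucial classical fact is that for any Banach limit $\mf$ and any $y \in \ell^\infty$ one has the estimate $\mf(y) \leq \limsup_{N \to \infty} \big(\sup_{m \geq 0} \frac{1}{N}\sum_{j=1}^{N} y_{m+j}\big)$; this is a standard bound obtained by writing $\mf(y) = \mf(\frac{1}{N}\sum_{j=0}^{N-1} S^j y)$ using shift-invariance (where $S$ is the shift) and then applying positivity together with the bound $\mf(z) \leq \sup_n z_n$ for $z \in \ell^\infty$. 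Applying this to $y = \1_A$ gives $\mf(\1_A) \leq \Bdsup(A) = 0$, and positivity gives $\mf(\1_A) \geq 0$, hence $\mf(\1_A) = 0$, so $A \notin \BL$.

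The main obstacle is the careful verification that the functional $\mf_A$ constructed in the first inclusion is truly shift-invariant, since that is where the averaging over growing windows is essential and where one must control the error terms uniformly; everything else reduces to standard properties of Banach limits. I would also make sure to note that both families are nonempty and upward-closed so that the verification that they are Furstenberg families is trivially inherited, though since we are proving set equality this is automatic once the inclusions are established.
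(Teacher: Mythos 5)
Your proposal is correct and follows essentially the same route as the paper: the forward inclusion is the same window-averaging construction (you compose with a Banach limit on the sequence of averages where the paper uses a non-principal ultrafilter limit, which is a cosmetic difference), and the reverse inclusion rests on the same key inequality $\mf(\1_A)\leq \Bdsup(A)$, which the paper obtains by citing Sucheston's theorem while you supply the short direct proof of that (easy) half via the Ces\`aro-averaging trick $\mf(y)=\mf\bigl(\tfrac{1}{N}\sum_{j=0}^{N-1}S^jy\bigr)\leq\sup_m\tfrac{1}{N}\sum_{j=1}^{N}y_{m+j}$. All the verifications you flag (shift-invariance of $\mf_A$ up to $O(1/N_k)$ errors, limit-preservation, positivity) go through exactly as you describe.
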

\begin{proof}
	The inclusion $\BDsup \subset \BL$ was already discussed in \cite[Fact~2.3.1]{GriLo2023} and follows since given $A \in \BDsup$ we can find a strictly increasing sequence of positive integers $(N_k)_{k\in\NN}$ and a sequence of intervals of positive integers $(J_k = \{ j_k+1, j_k+2, ..., j_k+N_k\})_{k\in\NN}$ such that
	\[
	\lim_{k\to\infty} \frac{\# (A \cap J_k)}{N_k} = \Bdsup(A) > 0,
	\]
	thus $\mf_A:\ell^{\infty}\longrightarrow \RR$ with $\mf_A(\phi) := \lim_{\Uc} \frac{1}{N_k} \sum_{n\in J_k} \phi_n$ for each $\phi=(\phi_n)_{n\in\NN} \in \ell^{\infty}$, where $\Uc \subset \Part(\NN)$ is a fixed non-principal ultrafilter on $\NN$, is a Banach limit for which $\mf_A(\1_A) = \Bdsup(A)>0$.
	
	Conversely, given a set $A \in \BL$ there is a Banach limit $\mf_A:\ell^{\infty}\longrightarrow \RR$ such that $\mf_A(A)>0$. Using now \cite[Theorem~1]{Sucheston1964}, which asserts that the maximum value that a Banach limit can get on a sequence $\phi=(\phi_n)_{n\in\NN} \in \ell^{\infty}$ is precisely the value given by the functional $M:\ell^{\infty}\longrightarrow \RR$ with
	\[
	M(\phi) := \lim_{N\to\infty} \left( \sup_{m \geq 0} \frac{1}{N} \sum_{j=m+1}^{m+N} \phi_j \right),
	\]
	we clearly have that $\Bdsup(A) = M(\1_A) \geq \mf_A(\1_A) > 0$ and hence $A \in \BDsup$.
\end{proof}

Proposition~\ref{Pro:BDsup=BL} shows that the measures from Theorem~\ref{The:lbo->measure} (but also those constructed in \cite{GriLo2023}) are optimal in terms of Banach limits and Furstenberg families. This observation slightly improves the classical result of Oxtoby and Ulam \cite[Theorem~1]{OxUlam1939} for Polish dynamical systems:

\begin{proposition}
	Let $(X,T)$ be a Polish dynamical system acting on the Polish space $(X,\tau_X)$. The following statements are equivalent and optimal in terms of Banach limits and Furstenberg families:
	\begin{enumerate}[{\em(i)}]
		\item there exists a positive finite $T$-invariant Borel measure $\mu$ on $(X,\Bi(X))$;
		
		\item there exist a point $x \in X$ and a $\tau_X$-compact set $K \subset X$ such that $\displaystyle \lim_{N\to\infty} \frac{\# (\Nc_T(x,K) \cap [1,N])}{N} > 0$;
		
		\item there exist a Hausdorff topology $\tau$ on $X$ fulfilling the properties {\em($\alpha$)}, {\em($\beta$)}, {\em($\gamma$)} and {\em($\delta$)} with respect to $\tau_X$, a point $x \in X$ and a $\tau$-compact set $K \subset X$ such that $\Bdsup(\Nc_T(x,K))>0$.
	\end{enumerate}
\end{proposition}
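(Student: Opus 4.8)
The plan is to establish the cycle of implications (i) $\Rightarrow$ (ii) $\Rightarrow$ (iii) $\Rightarrow$ (i), reading (i) as the existence of a \emph{nonzero} positive finite invariant measure (the zero measure being always invariant, it would otherwise trivialize the statement). The two short implications reuse machinery already available in the excerpt, while (i) $\Rightarrow$ (ii) carries the analytic content and is where I expect the main obstacle to lie.

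For the hard implication (i) $\Rightarrow$ (ii) I would combine the \emph{Birkhoff pointwise ergodic theorem} with the regularity of finite Borel measures on Polish spaces. After normalizing $\mu$ to a probability measure, inner regularity (recall that every finite Borel measure on a Polish space is regular) yields a $\tau_X$-compact set $K \subset X$ with $\mu(K)>0$. Since $T$ preserves $\mu$ and $\1_K \in L^1(\mu)$, Birkhoff's theorem provides a $T$-invariant limit function $\1_K^{*}$ with $\frac{1}{N}\sum_{n=1}^{N} \1_K(T^n x) \to \1_K^{*}(x)$ for $\mu$-a.e.\ $x$ and $\int_X \1_K^{*}\,d\mu = \mu(K) > 0$ (the shift between the indices $0,\dots,N-1$ of Birkhoff's averages and the indices $1,\dots,N$ of the return set contributes a term of size $O(1/N)$ and does not affect the limit). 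Because $\frac{1}{N}\sum_{n=1}^{N} \1_K(T^n x) = \frac{\#(\Nc_T(x,K)\cap[1,N])}{N}$, the positivity of the integral forces $\1_K^{*}>0$ on a set of positive measure; any point $x$ in that set, together with the compact set $K$, witnesses condition (ii).

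For (ii) $\Rightarrow$ (iii) I would simply take $\tau := \tau_X$. Then ($\alpha$)--($\delta$) hold trivially: $T$ is $\tau_X$-$\tau_X$-continuous since $(X,T)$ is a Polish dynamical system, $\tau_X \subset \tau_X$, every compact subset of the metrizable space $(X,\tau_X)$ is metrizable, and the two Borel $\sigma$-algebras coincide. Moreover the same point $x$ and compact set $K$ work for (iii), since $\Bdsup(\Nc_T(x,K)) \geq \lim_{N\to\infty}\frac{\#(\Nc_T(x,K)\cap[1,N])}{N} > 0$, the upper Banach density always dominating the natural density whenever the latter exists.

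For (iii) $\Rightarrow$ (i) I would reproduce the reasoning of Claim~\ref{Claim_1}: given $\tau$ fulfilling ($\alpha$)--($\delta$) and a $\tau$-compact $K$ with $\Bdsup(\Nc_T(x,K))>0$, Proposition~\ref{Pro:BDsup=BL} furnishes a Banach limit $\mf_K$ with $\mf_K(\1_{\Nc_T(x,K)}) = \Bdsup(\Nc_T(x,K)) > 0$, and then \cite[Lemma~2.1]{GriLo2023} applied to $x$ and $\mf_K$ produces a $T$-invariant finite measure $\mu$ with $\mu(K) \geq \mf_K(\1_{\Nc_T(x,K)}) > 0$, which is nonzero. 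Finally, the optimality assertion is not a separate implication but a direct reading of Proposition~\ref{Pro:BDsup=BL}: since $\BDsup = \BL$, the upper-Banach-density requirement in (iii) is exactly the weakest return-set condition for which the Banach-limit step in (iii) $\Rightarrow$ (i) can be carried out, so no strictly larger Furstenberg family could replace $\BDsup$ while preserving the equivalence.
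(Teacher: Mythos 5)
Your proposal is correct and follows essentially the same route as the paper: (ii)~$\Rightarrow$~(iii) via $\tau:=\tau_X$, (iii)~$\Rightarrow$~(i) via Proposition~\ref{Pro:BDsup=BL} together with \cite[Lemma~2.1]{GriLo2023}, and optimality read off from Proposition~\ref{Pro:BDsup=BL}. The only difference is that for (i)~$\Rightarrow$~(ii) the paper simply cites \cite[Theorem~1]{OxUlam1939}, whereas you write out the underlying argument (inner regularity plus the Birkhoff pointwise ergodic theorem), which is exactly the argument the paper alludes to and is carried out correctly.
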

\begin{proof}
	(i) $\Rightarrow$ (ii) was already shown in \cite[Theorem~1]{OxUlam1939} by using similar arguments to those employed in \cite[Lemma~3.1]{GriLo2023}. We have (ii) $\Rightarrow$ (iii) since $\tau_X$ fulfills the properties ($\alpha$), ($\beta$), ($\gamma$) and ($\delta$) with respect to itself. Finally, (iii) $\Rightarrow$ (i) follows from \cite[Lemma~2.1]{GriLo2023}. The optimality in terms of Banach limits, Furstenberg families and densities follows from Proposition~\ref{Pro:BDsup=BL} and \cite[Lemma~2.1]{GriLo2023}.
\end{proof}

\subsection{Almost-$\Fc$-recurrence and Devaney chaos on dual Fr\'echet spaces}\label{SubSec_3.2:PF+chaos}

We finish Section~\ref{Sec_3:applications} by showing two more {\em locally bounded orbit}'s applications, which are not related to the existence of invariant measures but that also use the ``dual Fr\'echet setting'' from Section~\ref{Sec_2:invariant.measures}. Both applications are based on recent investigations from Rodrigo Cardeccia and Santiago Muro, who have successfully used the ``adjoint operators acting on dual Banach spaces setting'' to obtain strong results regarding the notions of {\em almost-$\Fc$-recurrence} and {\em Devaney chaos} (see \cite{CarMur2022_arXiv} and \cite{CarMur2022_IEOT} respectively). 

Let us start by the notion of {\em almost-$\Fc$-recurrence}, name recently coined by Cardeccia and Muro for general Furstenberg families in \cite[Definition~3.2]{CarMur2022_arXiv}, although this concept was previously considered for particular families (under very different names) by several authors such as Costakis and Parissis~\cite{CoPa2012}, Badea and Grivaux~\cite[Proposition~4.6]{BadGri2007} and Grivaux and Matheron~\cite[Section~2.5]{GriMa2014}:

\begin{definition}[\textbf{\cite[Definition~3.2]{CarMur2022_arXiv}}]\label{Def:al-F-rec}
	Given a Furstenberg family $\Fc \subset \Part(\NN)$ we say that an operator $T \in \Lc(X)$ is {\em almost-$\Fc$-recurrent} if for every non-empty open subset $U$ of $X$ there exists a vector $x_U \in U$ such that the return set $\Nc_T(x_U,U) = \{ n \in \NN \ ; \ T^nx_U \in U \}$ belongs to $\Fc$.
\end{definition}

We remark that the notion of {\em almost-$\Fc$-recurrence} is highly inspired in the so-called {\em $\Pc_{\Fc}$ property} introduced in 2018 by Puig \cite{Puig2018}: an operator $T \in \Lc(X)$ has the {\em $\Pc_{\Fc}$ property} if for every non-empty open subset $U$ of $X$ there exists a vector $x_U \in X$ such that the return set $\Nc_T(x_U,U)$ belongs to $\Fc$. The only difference between these two concepts is the relation ``$x_U \in U$'', so that almost-$\Fc$-recurrence is slightly stronger than the $\Pc_{\Fc}$ property. However, both concepts coincide whenever $\Fc \subset \Part(\NN)$ is {\em left-invariant}, that is, if for every $A \in \Fc$ and $k \in \NN$ the set $(A-k) \cap \NN$ belongs to $\Fc$, where $A-k := \{ a-k \ ; \ a \in A \}$. Indeed, note that if $\Fc$ is left-invariant and there exists some $x_0 \in X\setminus U$ fulfilling that $\Nc_T(x_0,U) \in \Fc$ and for which $n_0 := \min(\Nc_T(x_0,U))$, then the vector $x_U:=T^{n_0}x \in U$ clearly fulfills that $(\Nc_T(x_0,U)-n_0) \cap \NN = \Nc_T(x_U,U)$ and hence $\Nc_T(x_U,U) \in \Fc$. It is worth mentioning that the usual families considered in the literature (such as $\BDsup$ and $\Dinf$ mentioned in Subsection~\ref{SubSec_3.1:applications+optimality}) are left-invariant (see also \cite{BoGr2018}, \cite[Section~3]{CarMur2022_arXiv} or \cite[Example~4.2]{GriLoPe2022}), so it is natural to just focus on the similarities/differences between {\em almost-$\Fc$-recurrence} and the standard notion of {\em $\Fc$-recurrence}.

As observed in \cite[Section~3]{CarMur2022_arXiv}, since the definition of $\Fc$-recurrence (see Definition~\ref{Def:F-rec}) requires the density of the set $\Fc\Rec(T)$, and since the $\Fc$-recurrent vectors return to each of their neighbourhoods with ``frequency $\Fc$'', it follows from Definitions~\ref{Def:F-rec} and \ref{Def:al-F-rec} that the concept of {\em almost-$\Fc$-recurrence} is (at least formally) weaker than that of {\em $\Fc$-recurrence}. Indeed, it is asked in~\cite[Section~5]{CarMur2022_arXiv} and still open for the moment, if both properties coincide or not for continuous linear operators. This question encourages to search for results similar in spirit to \cite[Theorem~1.3]{GriLo2023} and Corollary~\ref{Cor:application}, where several notions of $\Fc$-recurrence are shown to coincide for different Furstenberg families. In fact, one of the main lines of though in the recent work from Cardeccia and Muro \cite{CarMur2022_arXiv} is to search for families $\Fc \neq \Gc \subset \Part(\NN)$ fulfilling that almost-$\Fc$-recurrence and almost-$\Gc$-recurrence are equivalent properties. This led to the so-called {\em block families} (see \cite[Definition~3.3]{CarMur2022_arXiv} but also \cite{Glasner2004,HuangLiYe2012,Li2011}):

\begin{definition}
	For a Furstenberg family $\Fc \subset \Part(\NN)$ we define $b\Fc$, the associated {\em block family}, in the following way: a set $B \subset \NN$ belongs to $b\Fc$ if there exists some $A_B \in \Fc$ such that for each finite subset $F \subset A_B$ there is some $n_F \in \NN \cup \{0\}$ for which $F + n_F := \{ f+n_F \ ; \ f \in F \} \subset B$.
\end{definition}

Roughly speaking, the block family $b\Fc$ obtained from a given $\Fc$ is the collection of sets that contain every finite block from a fixed set of the original family, but possibly translated. Some general basic properties (such as the inclusion $\Fc \subset b\Fc$) and examples (such as the equality $b\Dinf = \BDsup$) are exposed in \cite[Section~3]{CarMur2022_arXiv}, and the authors prove in \cite[Theorem~3.12]{CarMur2022_arXiv} the equivalence between the notion of almost-$\Fc$-recurrence and that of almost-$b\Fc$-recurrence, for adjoint operators acting on dual Banach spaces and every left-invariant Furstenberg family $\Fc \subset \Part(\NN)$. Using {\em locally bounded orbits} and the ``dual Fr\'echet setting'' exposed at Section~\ref{Sec_2:invariant.measures} we can obtain the following extension of such a result:

\begin{theorem}\label{The:lbo->PF.property}
	Let $(Y,\tau_Y)$ be a quasi-$\ell_{\infty}$-barrelled Hausdorff locally convex topological vector space, assume that its strong dual $(X,\tau_X):=(Y',\beta(Y',Y))$ is a separable Fr\'echet space, and let $T \in \Lc(X)$ be the adjoint of some linear map $S:Y\longrightarrow Y$. If the set $[b\Fc]\Rec(T) \cap \lbo(T)$ is dense in $(X,\tau_X)$ for a left-invariant Furstenberg family $\Fc \subset \Part(\NN)$, then $T$ is almost-$\Fc$-recurrent.
\end{theorem}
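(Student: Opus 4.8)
The plan is to mimic the weak-compactness construction from \textbf{Claim~\ref{Claim_2}} in the proof of Theorem~\ref{The:lbo->measure}, and to combine it with an ``unwinding'' of the block family $b\Fc$ together with the left-invariance of $\Fc$. Fix a non-empty open subset $U\subset X$; the goal is to produce a vector $x_U\in U$ with $\Nc_T(x_U,U)\in\Fc$. Since $[b\Fc]\Rec(T)\cap\lbo(T)$ is dense we may pick a $b\Fc$-recurrent vector $z\in U$ with locally bounded orbit. Using the local boundedness of the orbit together with Remark~\ref{Rem:Theorem.conditions}(c), I would first choose a $\sigma(X,Y)$-closed $\tau_X$-neighbourhood $W$ of $z$ with $W\subset U$ and such that $W\cap\orb_T(z)$ is bounded in $X$. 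Exactly as in \textbf{Claim~\ref{Claim_2}}, the quasi-$\ell_{\infty}$-barrelled hypothesis makes the (countable, bounded) set $W\cap\orb_T(z)$ equicontinuous, so by the Alaoglu--Bourbaki theorem the set
\[
K:=\cl{W\cap\orb_T(z)}^{\sigma(X,Y)}\subset\cl{W}^{\sigma(X,Y)}=W\subset U
\]
is $\sigma(X,Y)$-compact, where the equality uses that $W$ is weakly closed.

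Next I would unwind the block recurrence of $z$. Since $z$ is $b\Fc$-recurrent and $W$ is a neighbourhood of $z$, we have $\Nc_T(z,W)\in b\Fc$, so by definition there is a set $A\in\Fc$ such that every finite $F\subset A$ admits some $n_F\in\NN\cup\{0\}$ with $F+n_F\subset\Nc_T(z,W)$. Write $f_0:=\min A\geq 1$ and, for each finite $F\subset A$ with $f_0\in F$, set $y_F:=T^{\,f_0+n_F}z$. Then $f_0+n_F\in\Nc_T(z,W)$ gives $y_F\in W\cap\orb_T(z)\subset K$, and for every $a\in F$ we likewise obtain
\[
T^{\,a-f_0}y_F=T^{\,a+n_F}z\in W\cap\orb_T(z)\subset K .
\]

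The main obstacle is to pass from these finite blocks to a single limit vector, and this is precisely where the weak compactness of $K$ coming from $\lbo(T)$ is crucial. Directing the finite sets $F\subset A$ with $f_0\in F$ by inclusion, the net $(y_F)$ lies in the $\sigma(X,Y)$-compact set $K$, so I would extract a subnet $(y_{F_i})$ converging weakly to some $x^*\in K$. Since $T$ is the adjoint of $S$ it is $\sigma(X,Y)$-$\sigma(X,Y)$-continuous by Lemma~\ref{Lem:top.conditions}($\alpha$), hence so is each power $T^{\,a-f_0}$ with $a\geq f_0$; fixing $a\in A$, the subnet eventually satisfies $a\in F_i$, so $T^{\,a-f_0}y_{F_i}\in K$ and, $K$ being weakly closed, the limit $T^{\,a-f_0}x^*$ belongs to $K$. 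Therefore $T^{\,a-f_0}x^*\in K\subset U$ for every $a\in A$, and in particular (taking $a=f_0$) $x^*\in K\subset U$. The delicate point here is that a single weak subnet of $(y_F)$ must control $T^{\,a-f_0}x^*$ for all $a\in A$ simultaneously, which is exactly what the relative $\sigma(X,Y)$-compactness provided by the quasi-$\ell_{\infty}$-barrelled assumption guarantees.

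Finally I would invoke the left-invariance of $\Fc$. Setting $x_U:=x^*\in U$, the previous step shows that $(A-f_0)\cap\NN\subset\Nc_T(x_U,U)$; since $A\in\Fc$ and $\Fc$ is left-invariant we get $(A-f_0)\cap\NN\in\Fc$, and as $\Fc$ is upward closed this yields $\Nc_T(x_U,U)\in\Fc$. As $U$ was an arbitrary non-empty open subset of $X$, the operator $T$ is almost-$\Fc$-recurrent in the sense of Definition~\ref{Def:al-F-rec}, which completes the plan.
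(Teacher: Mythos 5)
Your proposal is correct and follows essentially the same route as the paper's proof: localize via the locally bounded orbit and the quasi-$\ell_{\infty}$-barrelled hypothesis to get a $\sigma(X,Y)$-compact set $K\subset U$ by Alaoglu--Bourbaki, unwind the block family to produce orbit points staying in $K$ along translated finite blocks, pass to a weak limit $x_U\in K$, and conclude by weak continuity of $T$ and left-invariance of $\Fc$. The only (harmless) variation is that you extract a convergent subnet indexed by finite subsets of $A$, whereas the paper enumerates $A_n:=A\cap[1,n]$ and uses the $\sigma(X,Y)$-metrizability of $K$ (property ($\gamma$) of Lemma~\ref{Lem:top.conditions}) to extract an ordinary convergent subsequence.
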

\begin{proof}
	Let $U$ be an arbitrary but fixed non-empty $\tau_X$-open subset of $X$. By assumption there is some $x_0 \in U \cap [b\Fc]\Rec(T) \cap \lbo(T)$. Since $x_0$ has a locally bounded orbit we can find a $\sigma(X,Y)$-closed $\tau_X$-neighbourhood $V$ of $x_0$ fulfilling that $V \cap \orb_T(x_0)$ is a countable $\tau_X$-bounded set and hence an equicontinuous set in $Y'$ by the quasi-$\ell_{\infty}$-barrelled assumption on $(Y,\tau_Y)$. Without lost of generality we can assume that $V \subset U$, and by the Alaoglu-Bourbaki theorem (see \cite[Section~8.5]{Jarchow1981_book}) we have that
	\[
	K := \cl{V \cap \orb_T(x_0)}^{\sigma(X,Y)}
	\]
	is a $\sigma(X,Y)$-compact set fulfilling that $K \subset V \subset U$. Note also that we have the inclusions
	\[
	\Nc_T(x_0,V) = \Nc_T(x_0,V \cap \orb_T(x_0)) \subset \Nc_T(x_0,K) \subset \Nc_T(x_0,U),
	\]
	which imply that $\Nc_T(x_0,K) \in b\Fc$ by the hereditarily upward property of the Furstenberg family $\Fc$. By definition of block family there exists a set $A \in \Fc$ such that for every finite subset $F \subset A$ there is some $n_F \in \NN \cup \{0\}$ fulfilling that $F + n_F := \{ f+n_F \ ; \ f \in F \} \subset \Nc_T(x_0,K)$. Set $r_0:=\min(A)$.
	
	Following now the proof of \cite[Theorem~3.12]{CarMur2022_arXiv}, for each $n \in \NN$ let $A_n := A \cap [1,n]$ and pick some $a_n \in \NN \cup \{0\}$ such that $T^{a_n+r}(x_0) \in K$ for every $r \in A_n$. Since $K$ is $\sigma(X,Y)$-compact and $(Y,\tau_Y)$ is separable, property ($\gamma$) from Lemma~\ref{Lem:top.conditions} shows that $K$ is also $\sigma(X,Y)$-metrizable. Thus, there exist a subsequence $(a_{n_k})_{k\in\NN}$ and a vector $x_U \in K$ satisfying that $(T^{a_{n_k}+r_0}(x_0))_{k\in\NN}$ is $\sigma(X,Y)$-convergent to $x_U \in K \subset U$. We finally claim that $(A-r_0) \cap \NN \subset \Nc_T(x_U,U)$ and hence that $\Nc_T(x_U,U) \in \Fc$ by left-invariance, which finishes the proof since $U$ was arbitrary. Indeed, for any $r \in A$ with $r>r_0$,
	\[
	T^{r-r_0}(x_U) = T^{r-r_0}\left( \sigma(X,Y)\text{ --}\lim_{k\to\infty} T^{a_{n_k}+r_0}(x_0) \right) = \sigma(X,Y)\text{ --}\lim_{k\to\infty} T^{a_{n_k}+r}(x_0),
	\]
	and $T^{a_{n_k}+r}(x_0) \in K$ provided that $r \in A_{n_k}$, that is, as soon as $n_k>r$. The $\sigma(X,Y)$-compactness of the set $K$ implies that $T^{r-r_0}(x_U) \in K \subset U$ and hence $r-r_0 \in \Nc_T(x_U,U)$ as we had to show.
\end{proof}

We refer the reader to \cite[Section~3]{CarMur2022_arXiv} for more on {\em almost-$\Fc$-recurrence} and the possible consequences of Theorem~\ref{The:lbo->PF.property}. Let us now focus on the notion of chaos: a linear dynamical system $T \in \Lc(X)$ is called {\em Devaney chaotic} (or just {\em chaotic} for short) if $T$ is hypercyclic and the set of $T$-periodic vectors is dense. In \cite[Theorem~3.11]{CarMur2022_IEOT} Cardeccia and Muro characterize the notion of chaos, for adjoint operators acting on dual Banach spaces, as a concrete case of $\Fc$-hypercyclicity: they introduce the family $\AP_b \subset \Part(\NN)$ of {\em sets containing arbitrarily long arithmetic progressions with a fixed common bounded difference}, and they show that an adjoint operator acting on a dual Banach space $T \in \Lc(X)$ is chaotic if and only if $T$ is {\em $\AP_b$-hypercyclic}, but also if and only if the operator $T$ is hypercyclic and has {\em dense small periodic sets}. We can again extend this result to our ``dual Fr\'echet setting'':

\begin{theorem}\label{The:lbo->chaos}
	Let $(Y,\tau_Y)$ be a quasi-$\ell_{\infty}$-barrelled Hausdorff locally convex topological vector space, assume that its strong dual $(X,\tau_X):=(Y',\beta(Y',Y))$ is a separable Fr\'echet space, and let $T \in \Lc(X)$ be the adjoint of some linear map $S:Y\longrightarrow Y$. The following assertions are equivalent:
	\begin{enumerate}[{\em(i)}]
		\item $T$ is hypercyclic and the set $[\AP_b]\Rec(T) \cap \lbo(T)$ is dense in $X$;
		
		\item $T$ is hypercyclic and has dense small bounded periodic sets;
		
		\item $T$ is (Devaney) chaotic. 
	\end{enumerate}
\end{theorem}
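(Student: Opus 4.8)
The plan is to prove a cycle of implications among the three assertions, using Theorem~\ref{The:lbo->PF.property} as the main engine for the hardest direction. The natural order is (iii)~$\Rightarrow$~(ii)~$\Rightarrow$~(i)~$\Rightarrow$~(iii), since (iii) is the most classical notion and (i) is the one directly amenable to our recurrence machinery. I first recall the relevant definitions: $T$ has \emph{dense small bounded periodic sets} should mean that for every non-empty $\tau_X$-open set $U$ there is a finite $T$-invariant set (a periodic orbit) meeting $U$, with the additional requirement that this set be $\tau_X$-bounded; and $\AP_b$ is the family of sets containing arbitrarily long arithmetic progressions with a fixed common bounded difference, which is readily checked to be left-invariant.

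\textbf{Proof of (iii)~$\Rightarrow$~(ii).} Assume $T$ is Devaney chaotic, so the $T$-periodic vectors are dense. Each $T$-periodic vector $x$ (with $T^px=x$) has a finite orbit $\orb_T(x)=\{x,Tx,\dots,T^{p-1}x\}$, which is automatically $\tau_X$-bounded; this is exactly a small bounded periodic set meeting any neighbourhood of $x$. Density of periodic vectors then gives dense small bounded periodic sets, and hypercyclicity is part of the hypothesis, so (ii) holds. This step is purely formal.

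\textbf{Proof of (ii)~$\Rightarrow$~(i).} Assume $T$ is hypercyclic with dense small bounded periodic sets. Given any non-empty $\tau_X$-open $U$, the dense-small-bounded-periodic-sets hypothesis supplies a $T$-periodic vector $x\in U$ whose (finite, hence bounded) orbit lies near $U$. A $T$-periodic vector is $\Fc$-recurrent for \emph{every} family $\Fc$ whose members include arithmetically-structured sets — in particular, for a $p$-periodic $x$ the return set $\Nc_T(x,U)$ to any neighbourhood $U$ of $x$ contains the full arithmetic progression $p\NN$, so $\Nc_T(x,U)\in\AP_b$ and thus $x\in[\AP_b]\Rec(T)$; moreover $x\in\lbo(T)$ by Remark~\ref{Rem:lbo.1}(b2), since a $T$-periodic vector has bounded orbit. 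Hence the $T$-periodic vectors sit inside $[\AP_b]\Rec(T)\cap\lbo(T)$, and their density yields the density of this set. Combined with hypercyclicity, this gives (i). The main subtlety here is verifying that the periodic vectors supplied by the ``small bounded periodic sets'' condition genuinely lie in the required neighbourhoods and are themselves recurrent; this is where the ``bounded'' qualifier interacts with $\lbo(T)$, and I expect it to require a short but careful unwinding of the definition.

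\textbf{Proof of (i)~$\Rightarrow$~(iii).} This is the heart of the argument and the step I expect to be the main obstacle. Assume $T$ is hypercyclic and $[\AP_b]\Rec(T)\cap\lbo(T)$ is dense. The key observation, following \cite[Theorem~3.11]{CarMur2022_IEOT}, is that $\AP_b$ is (up to the block-family formalism) the block family of the family $\mathcal{AP}$ of sets containing arbitrarily long arithmetic progressions, so one applies Theorem~\ref{The:lbo->PF.property} with $\Fc$ chosen as the appropriate left-invariant family to conclude that $T$ is almost-$\AP$-recurrent, which for a hypercyclic operator forces density of $\AP$-hypercyclic vectors and ultimately density of periodic points. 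The delicate point is threefold: first, one must identify the correct family $\Fc$ so that $b\Fc$ recovers $\AP_b$ and $\Fc$ is left-invariant, so that Theorem~\ref{The:lbo->PF.property} applies verbatim; second, one must transfer the resulting almost-$\Fc$-recurrence into genuine density of $T$-periodic vectors, which in the Banach case is carried out in \cite[Theorem~3.11]{CarMur2022_IEOT} via an argument combining the arithmetic-progression structure of the return sets with the $\sigma(X,Y)$-compactness extracted from local boundedness; and third, one must check that every ingredient of that Banach-space argument survives the passage to the Fr\'echet setting — precisely the reason $\lbo(T)$ and the quasi-$\ell_\infty$-barrelled hypothesis were introduced. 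I would therefore mirror the proof of Theorem~\ref{The:lbo->PF.property} almost line by line, replacing the unit ball by the $\sigma(X,Y)$-compact hull $\cl{V\cap\orb_T(x_0)}^{\sigma(X,Y)}$ furnished by the locally-bounded-orbit assumption, and then invoke the Banach-space conclusion of \cite[Theorem~3.11]{CarMur2022_IEOT} now that the compactness and metrizability from Lemma~\ref{Lem:top.conditions}(($\gamma$)) are available.
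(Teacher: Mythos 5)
Your cycle (iii)~$\Rightarrow$~(ii)~$\Rightarrow$~(i)~$\Rightarrow$~(iii) differs from the paper's, which runs (iii)~$\Rightarrow$~(i)~$\Rightarrow$~(ii)~$\Rightarrow$~(iii), and the reordering is where the trouble lies: it forces you to pass \emph{through} (ii) on the way to (i), and your argument for that passage is circular. You write that ``the dense-small-bounded-periodic-sets hypothesis supplies a $T$-periodic vector $x\in U$''. It does not. By definition, a small bounded periodic set is merely a bounded set $Z\subset U$ with $T^p(Z)\subset Z$ for some $p\in\NN$; nothing in the definition guarantees that $Z$ contains a single $T$-periodic vector. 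Extracting an actual periodic point from such a set is precisely the non-trivial content of the implication (ii)~$\Rightarrow$~(iii), and it is exactly here that the dual/quasi-$\ell_\infty$-barrelled hypotheses enter: one needs every $\tau_X$-bounded set to be relatively $\sigma(X,Y)$-compact so that a limiting (or fixed-point) argument applied to $(T^{kp}z)_{k\in\NN}\subset Z$ produces a fixed point of $T^p$ near $U$, as in \cite[Theorem~3.11]{CarMur2022_IEOT}. By assuming that (ii) already hands you periodic vectors, you have smuggled the conclusion of the hard implication into a step you describe as ``purely formal''; with that step removed, your three implications only establish (i)~$\Rightarrow$~(iii)~$\Rightarrow$~(ii), and nothing flows out of (ii).

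Two further, more minor points. First, in (iii)~$\Rightarrow$~(ii) the finite orbit of a periodic vector $x$ need not be contained in the given open set $U$ (only $x$ is); you should take $Z=\{x\}$, which trivially satisfies $T^p(Z)\subset Z$, rather than the whole orbit. Second, in (i)~$\Rightarrow$~(iii) your identification of $\AP_b$ as ``the block family of $\AP$'' is not right as stated: $b[\AP]$ does not control the common difference, whereas $\AP_b$ requires arbitrarily long progressions with a \emph{fixed bounded} difference; the natural generating family is rather the (left-invariant) family of sets containing an infinite arithmetic progression, or one can bypass the identification entirely and argue directly as in \cite[Proposition~3.9 and Lemma~3.10]{CarMur2022_IEOT}, which is what the paper's sketch does: it splits your single step (i)~$\Rightarrow$~(iii) into (i)~$\Rightarrow$~(ii) (building the bounded periodic sets from an $\AP_b$-recurrent vector with locally bounded orbit, via the $\sigma(X,Y)$-compact hull $\cl{V\cap\orb_T(x_0)}^{\sigma(X,Y)}$) followed by (ii)~$\Rightarrow$~(iii). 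The cleanest repair of your write-up is to adopt that decomposition, i.e.\ to prove (ii)~$\Rightarrow$~(iii) honestly using the relative $\sigma(X,Y)$-compactness of bounded sets, and to drop the step (ii)~$\Rightarrow$~(i) altogether.
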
\newpage
The interested reader can find the precise definition of the previous concepts in \cite{CarMur2022_IEOT}, but let us just mention that $T \in \Lc(X)$ is said to have {\em dense small bounded periodic sets} if every non-empty open subset $U \subset X$ contains a bounded set $Y \subset U$ such that $T^p(Y) \subset Y$ for some $p \in \NN$. The proof is analogous to that of \cite[Theorem~3.11]{CarMur2022_IEOT}, but using the arguments of Theorems~\ref{The:lbo->measure} and \ref{The:lbo->PF.property} regarding locally bounded orbits, and we just include a sketch of the proof:
\begin{proof}[\textit{Sketch to prove Theorem~\ref{The:lbo->chaos}}]
	For (iii) $\Rightarrow$ (i) recall that every periodic vector is trivially $\AP_b$-recurrent; to prove (i) $\Rightarrow$ (ii) one can adapt the arguments employed in both Theorems~\ref{The:lbo->measure} and \ref{The:lbo->PF.property} regarding locally bounded orbits, but following the proof of \cite[Proposition~3.9 and Lemma~3.10]{CarMur2022_IEOT}; and the final implication (ii) $\Rightarrow$ (iii) follows as in \cite[Theorem~3.11]{CarMur2022_IEOT} because every bounded set in $(X,\tau_X)$ is relatively $\sigma(X,Y)$-compact (recall that $(Y,\tau_Y)$ is quasi-barrelled by separability, see Remark~\ref{Rem:Theorem.conditions}).
\end{proof}

Note that Theorems~\ref{The:lbo->measure}, \ref{The:lbo->PF.property} and \ref{The:lbo->chaos} are just modest extensions to the Fr\'echet-space setting of three results originally proved for operators acting on Banach spaces. However, and as we are about to show in the following section, the main classical examples of operators in Linear Dynamics present plenty of locally bounded orbits so that the results obtained here will usually be applicable.

\section{More on locally bounded orbits}\label{Sec_4:lbo}

In this final section we elaborate further on {\em locally bounded orbits} (see Definition~\ref{Def:lbo}). In particular, we study the topological and dynamical structure of the set $\lbo(T)$, we give some explicit examples of locally and non-locally bounded orbits for operators acting on (non-Banach) Fr\'echet spaces, and we argue which kind of strong recurrence is needed for a locally bounded orbit to be bounded. We also include a final subsection with some open problems regarding the main results of this paper.

\subsection{Stability results and explicit examples}

When studying a set of vectors with some dynamical property with respect to an operator $T \in \Lc(X)$, in our case the set $\lbo(T)$, many properties may be observed. First of all we can look at the size of such a set: we have already argued in Remark~\ref{Rem:lbo.1} that $\lbo(T)$ can be the whole space, it could also be a dense but meager set, or even the singleton set formed by the zero-vector $\lbo(T)=\{0\}$ (see \cite{Golinski2013}). Let us show that, at least, the set $\lbo(T)$ always contains the linear span of the $T$-eigenvectors:


\begin{proposition}\label{Pro:eigenvectors->lbo}
	For every $T \in \Lc(X)$ acting on (real or complex) Fr\'echet space $X$ we have the inclusion $\lspan(\operatorname{Eig}(T)) \subset \lbo(T)$, where $\operatorname{Eig}(T) := \{ x \in X \ ; \ Tx = \lambda x \text{ for some } \lambda \in \KK \}$.
\end{proposition}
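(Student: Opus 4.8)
The plan is to reduce the problem to the finite-dimensional subspace spanned by the relevant eigenvectors and to split into two regimes according to the moduli of the eigenvalues involved. Given $x \in \lspan(\operatorname{Eig}(T))$, I would first write $x$ as a finite sum of eigenvectors and group together those sharing a common eigenvalue: since each eigenspace is a linear subspace, after grouping and discarding zero terms we obtain $x = \sum_{i=1}^{k} w_i$, where each $w_i \neq 0$ is an eigenvector with $Tw_i = \lambda_i w_i$ and the eigenvalues $\lambda_1,\dots,\lambda_k$ are pairwise distinct. Recalling the classical fact that eigenvectors associated with distinct eigenvalues are linearly independent, the set $\{w_1,\dots,w_k\}$ is a basis of the finite-dimensional subspace $F := \lspan\{w_1,\dots,w_k\}$, which is therefore closed in $X$ and carries its unique Hausdorff vector topology (linearly homeomorphic to $\KK^k$). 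Since $T^n x = \sum_{i=1}^{k} \lambda_i^n w_i \in F$ for every $n$, the whole orbit lives in $F$ and its behaviour is governed by the coordinate sequence $(\lambda_1^n,\dots,\lambda_k^n)$.

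Set $\rho := \max_{1\leq i \leq k} |\lambda_i|$. In the regime $\rho \leq 1$ I would argue that the orbit is bounded: each scalar satisfies $|\lambda_i^n| \leq 1$, so $\orb_T(x)$ is contained in the finite sum of balanced sets $\sum_{i=1}^{k} \{ \mu w_i \ ; \ |\mu|\leq 1 \}$, which is bounded in $X$ as a finite sum of bounded sets. Taking $U = X$ as a neighbourhood of $x$ in Definition~\ref{Def:lbo} then yields $x \in \lbo(T)$ at once.

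In the complementary regime $\rho > 1$ I would instead show that $x$ is not recurrent and invoke the definition directly. Fixing a norm $\|\cdot\|_F$ inducing the topology of $F$ and an index $j$ with $|\lambda_j| = \rho$, the $j$-th coordinate satisfies $|\lambda_j^n| = \rho^n \to \infty$, so $\|T^n x\|_F \to \infty$; since $F \cong \KK^k$, this sequence has no accumulation point in $F$, and in particular $x \notin \cl{\orb_T(x)}^F$. As $F$ is closed in $X$ and contains the orbit, we have $\cl{\orb_T(x)}^X = \cl{\orb_T(x)}^F \subset F$, whence $x \notin \cl{\orb_T(x)}^X$, i.e.\ $x \in X\setminus\Rec(T)$. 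Non-recurrence then provides a neighbourhood $U$ of $x$ with $U \cap \orb_T(x) = \varnothing$, so $U \cap \orb_T(x)$ is trivially bounded and again $x \in \lbo(T)$. (When $X$ is Banach one may of course skip the case analysis entirely, since then $\lbo(T) = X$ by Remark~\ref{Rem:lbo.1}(a).)

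The main obstacle is precisely the passage from a single eigenvector to an arbitrary element of the span: one must rule out the possibility that, when several eigenvalues share the maximal modulus $\rho > 1$, the oscillating phases $\lambda_i^n/\rho^n$ conspire to bring the orbit back near $x$. This is where the linear independence of the $w_i$ and the reduction to $F \cong \KK^k$ do the work, since a single blowing-up coordinate forces $\|T^n x\|_F \to \infty$ regardless of any cancellation among the phases, so no return to $x$ is possible. The remaining points are routine: that finite-dimensional subspaces of a Fr\'echet space are closed and carry the Euclidean topology, and that a finite sum of bounded (balanced) sets is bounded.
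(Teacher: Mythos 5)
Your proof is correct and follows essentially the same route as the paper's: decompose $x$ into linearly independent eigenvectors with pairwise distinct eigenvalues, reduce to the finite-dimensional span, and conclude via the dichotomy that the orbit is either bounded or escapes to infinity (in which case $x\notin\Rec(T)$ and a neighbourhood missing the orbit witnesses $x\in\lbo(T)$). The only cosmetic difference is that you split on $\max_i|\lambda_i|\leq 1$ versus $\max_i|\lambda_i|>1$, whereas the paper splits on $x\in\Rec(T)$ versus $x\notin\Rec(T)$ and shows recurrence forces all $|\lambda_i|=1$; the underlying ideas are identical.
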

\begin{proof}
	Recall that $X \setminus \Rec(T) \subset \lbo(T)$ as argued in Remark~\ref{Rem:lbo.1}, so that we just have to check the inclusion $\lspan(\operatorname{Eig}(T)) \cap \Rec(T) \subset \lbo(T)$. We start by noticing that for each $x \in \lspan(\operatorname{Eig}(T))$ there exist non-zero numbers $\alpha_1,...,\alpha_k \in \KK\setminus\{0\}$, eigenvalues $\lambda_1,...,\lambda_k \in \KK$ and vectors $x_1,...,x_k \in \operatorname{Eig}(T)$ such that $x = \sum_{1\leq j\leq k} \alpha_j x_j$, where $Tx_j=\lambda_j x_j$ and $\lambda_j \neq \lambda_l$ for all $1\leq j\neq l\leq k$. Hence, the set of eigenvectors $\{x_1,...,x_k\} \subset X$ is linearly independent and, since $T^nx - x = \sum_{1\leq j\leq k} (\lambda_j^n - 1) \cdot \alpha_j x_j$ for each $n \in \NN$, it follows that $x \in \Rec(T)$ if and only if $|\lambda_j| = 1$ for all $1\leq j \leq k$. We deduce that, if $x \in \lspan(\operatorname{Eig}(T)) \cap \Rec(T)$, then the orbit $\orb_T(x)$ is a compact set and finally $x \in \lbo(T)$.
\end{proof}

Note that the linear subspace $\lspan(\operatorname{Eig}(T)) \subset X$ is a $T$-invariant set for every operator $T \in \Lc(X)$, so that a natural property to look at is the $T$-invariance of $\lbo(T)$:

\begin{proposition}\label{Pro:invertible->invariant}
	If a continuous linear operator $T \in \Lc(X)$ is invertible, then $T(\lbo(T)) \subset \lbo(T)$.
\end{proposition}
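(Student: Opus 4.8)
The plan is to show that $Tx \in \lbo(T)$ for every $x \in \lbo(T)$ by transporting, via $T$ itself, a neighbourhood witnessing the local boundedness of $x$ into one witnessing the local boundedness of $Tx$. The key observation is the orbit identity $\orb_T(Tx) = T(\orb_T(x))$, which is immediate from the definition $\orb_T(x) = \{T^n x : n \geq 1\}$, since
\[
T(\{T^n x : n \geq 1\}) = \{T^{n+1}x : n \geq 1\} = \orb_T(Tx).
\]

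First I would fix $x \in \lbo(T)$ and choose, by Definition~\ref{Def:lbo}, a neighbourhood $U$ of $x$ such that $U \cap \orb_T(x)$ is bounded in $X$. Since $T$ is invertible, the open mapping theorem for Fr\'echet spaces guarantees that $T^{-1} \in \Lc(X)$, so $T$ is a linear homeomorphism; in particular $V := T(U)$ is a neighbourhood of $Tx$. Next I would compute the relevant intersection using the injectivity of $T$: because $T$ is a bijection, $T(A) \cap T(B) = T(A \cap B)$ for all subsets $A, B \subset X$, whence
\[
V \cap \orb_T(Tx) = T(U) \cap T(\orb_T(x)) = T(U \cap \orb_T(x)).
\]
Finally, since $U \cap \orb_T(x)$ is bounded and every continuous linear operator maps bounded sets to bounded sets, the set $T(U \cap \orb_T(x))$ is bounded; thus $V \cap \orb_T(Tx)$ is a bounded set contained in the neighbourhood $V$ of $Tx$, which yields $Tx \in \lbo(T)$ as required.

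No step here presents a genuine obstacle: the argument rests only on three standard facts (the open mapping theorem, that bijections preserve intersections, and that continuous linear maps preserve boundedness) together with the orbit identity above. The only point deserving a word of care is the meaning of ``invertible'', which must include continuity of the inverse; over Fr\'echet spaces this is automatic from the open mapping theorem, so a continuous linear bijection suffices. It is also worth noting that invertibility is used essentially twice --- once to make $T$ an open map, so that $V=T(U)$ is a neighbourhood of $Tx$, and once to push the intersection through $T$ --- and that without injectivity the identity $T(U) \cap T(\orb_T(x)) = T(U \cap \orb_T(x))$ could fail, so the hypothesis cannot simply be dropped.
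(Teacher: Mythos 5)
Your proof is correct and follows essentially the same route as the paper: the paper picks a neighbourhood $U_1$ of $Tx$ with $T^{-1}(U_1)\subset U_0$ and shows $U_1\cap\orb_T(Tx)\subset T(U_0\cap\orb_T(x))$, while you push $U$ forward to $V=T(U)$ and get the equality $V\cap\orb_T(Tx)=T(U\cap\orb_T(x))$; both then conclude via the fact that continuous linear maps preserve bounded sets. The difference is purely presentational.
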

\begin{proof}
	Given any $x_0 \in \lbo(T)$ set $x_1 := T(x_0)$. By definition there exists a neighbourhood $U_0$ of $x_0$ such that $U_0 \cap \orb_T(x_0)$ is a bounded set, and by the continuity of $T^{-1}$ we can find a neighbourhood $U_1$ of $x_1$ such that $T^{-1}(U_1) \subset U_0$. We claim that $U_1 \cap \orb_T(x_1) \subset T(U_0 \cap \orb_T(x_0))$, which would finish the proof since the image of a bounded set by a continuous operator is again bounded. Indeed, given any vector $x \in U_1 \cap \orb_T(x_1)$ there exists some $n \in \NN$ such that $x = T^nx_1 = T^{n+1}x_0 \in U_1$, but then the vector $y := T^{-1}(x) = T^{n-1}x_1 = T^nx_0$ fulfills that $y \in U_0 \cap \orb_T(x_0)$ and $x=Ty$.
\end{proof}


Let us apply Propositions~\ref{Pro:eigenvectors->lbo} and \ref{Pro:invertible->invariant} to the following well-known class of operators:

\begin{example}[\textbf{Birkhoff, MacLane and differential operators}]
	For a complex number $a \in \CC\setminus\{0\}$ consider the {\em translation operator}, also called the {\em Birkhoff operator}, $T_a : H(\CC) \longrightarrow H(\CC)$ on the space of entire functions endowed with the usual compact-open topology, where $[T_a(f)](z) := f(z+a)$ for each $f \in H(\CC)$. The operator $T_a$ is invertible and chaotic (see \cite[Example~2.35]{GrPe2011_book}), so that the set $\lbo(T_a)$ is $T_a$-invariant by Proposition~\ref{Pro:invertible->invariant}, and a dense but meager set in $H(\CC)$. A function $f \in H(\CC)$ belongs to $\lbo(T_a)$ if and only if there exist $k_0 \in \NN$, $\eps>0$ and $w=(w_j)_{j\in\NN} \in \ ]0,+\infty[^{\NN}$ such that
	\[
	\forall n \in \NN \quad \text{with} \quad \max_{|z| \leq k_0} \left|f(z) - f(z+na)\right| < \eps, \quad \text{then} \quad \max_{|z| \leq j} \left|f(z+na)\right| \leq w_j \quad \forall j \in \NN.
	\]
	
	In the space of entire functions we can also consider the {\em standard differential operator}, commonly known as the {\em MacLane operator}, $D : H(\CC) \longrightarrow H(\CC)$, where $[D(f)](z):=f'(z)$ for each $f \in H(\CC)$. The operator $D$ is not invertible but chaotic (see again \cite[Example~2.35]{GrPe2011_book}), so that Proposition~\ref{Pro:invertible->invariant} does not apply although the set $\lbo(D)$ is again a dense but meager set in $H(\CC)$. A function $f \in H(\CC)$ belongs to $\lbo(D)$ if and only if there exist $k_0 \in \NN$, $\eps>0$ and $w=(w_j)_{j\in\NN} \in \ ]0,+\infty[^{\NN}$ such that
	\[
	\forall n \in \NN \quad \text{with} \quad \max_{|z| \leq k_0} \left|f(z) - f^{(n)}(z)\right| < \eps, \quad \text{then} \quad \max_{|z| \leq j} \left|f^{(n)}(z)\right| \leq w_j \quad \forall j \in \NN,
	\]
	where we are using the notation $f^{(n)}(z) := [D^n(f)](z)$.
	
	Birkhoff and MacLane operators are a particular case of the so-called {\em differential operators}, usually denoted by $\varphi(D) : H(\CC) \longrightarrow H(\CC)$, where $\varphi \in H(\CC)$ is an {\em entire function of exponential type} acting on the {\em standard differential operator} $D : H(\CC) \longrightarrow H(\CC)$. These operators were originally studied by Godefroy and Shapiro (see \cite[Section~5]{GodSha1991}) and it is well-known that for each $a \in \CC\setminus\{0\}$ we have the equality $T_a = e^{aD}$ but also $D = p(D)$ for the polynomial $p(z)=z$ (see \cite[Section~4.2]{GrPe2011_book} for more details). It is clear that a general differential operator $\varphi(D)$ is not necessarily invertible, so that Proposition~\ref{Pro:invertible->invariant} can not always be applied. However, Godefroy and Shapiro showed that $\varphi(D)$ is a chaotic operator, and hence $\lbo(\varphi(D))$ is a dense but meager set in $H(\CC)$, as soon as $\varphi$ is a non-constant function (see \cite[Theorem~5.1]{GodSha1991}). Note that an entire function $f \in H(\CC)$ belongs to $\lbo(\varphi(D))$ if and only if there exist $k_0 \in \NN$, $\eps>0$ and $w=(w_j)_{j\in\NN} \in \ ]0,+\infty[^{\NN}$ such that
	\[
	\forall n \in \NN \quad \text{with} \quad \max_{|z| \leq k_0} \big|f(z) - [\varphi(D)^n(f)](z)\big| < \eps, \quad \text{then} \quad \max_{|z| \leq j} \big|[\varphi(D)^n(f)](z)\big| \leq w_j \quad \forall j \in \NN.
	\]
	
	Finally, given any differential operator $\varphi(D)$ we can apply Proposition~\ref{Pro:eigenvectors->lbo} obtaining that the linear span of the exponential functions $\mathcal{A}:=\lspan\{ e^{\lambda z} \ ; \ \lambda \in \CC \}$ is contained in $\lbo(\varphi(D))$. Indeed, it is well-known and not hard to check that
	\[
	\varphi(D)(e^{\lambda z}) = \varphi(\lambda) \cdot e^{\lambda z} \quad \text{ for every } \lambda \in \CC,
	\]
	so that $e^{\lambda z} \in \operatorname{Eig}(\varphi(D))$. Moreover, $\mathcal{A}$ is a dense set (see the nice proof of \cite[Lemma~2.34]{GrPe2011_book} originally from \cite[Sublemma~7]{AMar2004}), so that $\mathcal{A}$ is a dense $\varphi(D)$-invariant subalgebra of $(H(\CC),+,\cdot)$ with respect to the usual addition and pointwise product of entire functions, contained in $\lbo(\varphi(D))$ for every differential operator $\varphi(D)$. In summary, {\em differential operators} have plenty of locally bounded orbits.
\end{example}

Let us show that the set of locally bounded orbits $\lbo(T)$ is not necessarily $T$-invariant when the studied operator $T \in \Lc(X)$ is not invertible:

\begin{example}[\textbf{The backward shift on the space of all sequences}]\label{Ex:B.omega}
	Let $\omega = \KK^{\NN}$ be the space of all (real or complex) sequences endowed with the standard Fr\'echet topology of convergence in all coordinates (see for instance \cite[Example~2.2]{GrPe2011_book}). Consider the {\em backward shift operator} $B:\omega\longrightarrow\omega$, which acts as $B((x_j)_{j\in\NN}) := (x_{j+1})_{j\in\NN}$ for each $x=(x_j)_{j\in\NN} \in \omega$. It is well-known and easy to check that $B$ is chaotic, so that $\lbo(B)$ is a dense but meager set in $\omega$, and a sequence $x=(x_j)_{j\in\NN} \in \omega$ belongs to $\lbo(B)$ if and only if there exist $k_0 \in \NN$, $\eps>0$ and $w=(w_j)_{j\in\NN} \in \ ]0,+\infty[^{\NN}$ such that
	\[
	\forall n \in \NN \quad \text{with} \quad \max_{1\leq j\leq k_0} |x_j - x_{n+j}| < \eps, \quad \text{then} \quad |x_{n+j}| \leq w_j \quad \forall j \in \NN.
	\]
	
	Note that the set $\ell^{\infty}_{\KK} := \{ (x_j)_{j\in\NN} \in \KK^{\NN} \ ; \ \sup_{j\in\NN} |x_j| < +\infty \}$ of all (real or complex) bounded sequences is a dense linear subspace of $\lbo(B)$. Indeed, $\ell^{\infty}_{\KK}$ is even a dense $B$-invariant subalgebra of the Fr\'echet algebra $(\omega,+,\cdot)$ with respect to the coordinatewise addition and product of sequences, which shows that the backward shift on $\omega$ presents plenty of locally bounded orbits.
	
	Contrary to Birkhoff operators, the backward shift $B:\omega\longrightarrow\omega$ is not invertible and we claim that it does not satisfy the conclusion of Proposition~\ref{Pro:invertible->invariant}. Indeed, one way of proving that $B(\lbo(B))$ is not included in $\lbo(B)$ is the following: we construct a vector $y = (y_j)_{j\in\NN} \in \omega\setminus\lbo(B)$ such that $y_j>0$ for every $j \in \NN$ (so that $y$ is non-hypercyclic for $B$), and we consider $z = (z_j)_{j\in\NN} \in \omega$ with
	\[
	z_j = \begin{cases} -1 & \text{ if } j=1,\\ y_{j-1} & \text{ if } j>1, \end{cases}
	\]
	since then we will have $z \in \lbo(B)$ but $Bz = y \notin \lbo(B)$. We construct $y=(y_j)_{j\in\NN}$ recursively:
	\begin{enumerate}[--]
		\item \textbf{Step 1}: We start by fixing any finite word of positive numbers $(y_1,y_2,y_3,...,y_N) \in \ ]0,+\infty[^N$, which will be the first $N\geq 1$ coordinates of the final vector $y=(y_j)_{j\in\NN} \in \omega$.
		
		\item \textbf{Step 2}: We now consider the following sequence of finite words
		\begin{align*}
			&(y_1,1),\\
			&(y_1,2), \quad (y_1,y_2,2),\\
			&(y_1,3), \quad (y_1,y_2,3), \quad (y_1,y_2,y_3,3),\\
			&\quad \vdots\\
			&(y_1,N), \quad (y_1,y_2,N), \quad (y_1,y_2,y_3,N), \quad \cdots \quad (y_1,y_2,y_3,...,y_N,N),
		\end{align*}
		and we add them after the first $N$ coordinates we already had, obtaining the ``new'' finite sequence
		\[
		(y_1,y_2,y_3,...,y_N,y_{N+1},...,y_{N+\phi(N)}) := (y_1,y_2,y_3,...,y_N,\underbrace{y_1,1},\underbrace{y_1,2},\underbrace{y_1,y_2,2},...,\underbrace{y_1,y_2,y_3,...,y_N,N}),
		\]
		where we have the equality $\phi(N) = \sum_{i=1}^{N} (i+1)(N+1-i) = \frac{N^3+6N^2+5N}{6}$.
		
		\item \textbf{Step 3}: We repeat \textbf{Step 2} infinitely many times, but each time on the ``new'' and strictly longer finite sequence obtained from the previous application of \textbf{Step 2}, and we let $y=(y_j)_{j\in\NN} \in \omega$ be the final limit sequence obtained from this recursive process.
	\end{enumerate}
	Note that $y = (y_j)_{j\in\NN} \notin \lbo(B)$ since for every $k \in \NN$ we have that the finite word $(y_1,y_2,y_3,...,y_k,M)$ appears along the sequence $(y_j)_{j\in\NN}$ for every positive integer $M \in \NN$.
\end{example}

\begin{example}[\textbf{The backward shift on K\"othe spaces}]\label{Ex:B.lambda}
	Following \cite[Chapter~27]{MeisVogt1997_book} we will say that an infinite matrix $A = (a_{k,j})_{k,j\in\NN}$ of non-negative numbers is a {\em K\"othe matrix} if it satisfies:
	\begin{enumerate}[(KM1)]
		\item The inequality $a_{k,j} \leq a_{k+1,j}$ holds for all $k,j \in \NN$.
		
		\item For each $j \in \NN$ there exists some $k \in \NN$ such that $a_{k,j}>0$.
	\end{enumerate}
	Given such a matrix $A = (a_{k,j})_{k,j\in\NN}$ and $1\leq p<\infty$ the {\em K\"othe space} $\lambda^p(A)$ is defined as
	\[
	\lambda^p(A) := \left\{ x = (x_j)_{j\in\NN} \in \KK^{\NN} \ ; \ q_k(x) := \left( \sum_{j\in\NN} |x_j \cdot a_{k,j}|^p \right)^{1/p} < \infty \text{ for all } k \in \NN \right\},
	\]
	and for $p=\infty$ the {\em K\"othe space} $\lambda^{\infty}(A)$ is defined as
	\[
	\lambda^{\infty}(A) := \left\{ x = (x_j)_{j\in\NN} \in \KK^{\NN} \ ; \ r_k(x) := \sup_{j\in\NN} |x_j| \cdot a_{k,j} < \infty \text{ for all } k \in \NN \right\},
	\]
	where $(q_k)_{k\in\NN}$ and $(r_k)_{k\in\NN}$ are sequences of seminorms defining the topology of $\lambda^p(A)$ and $\lambda^{\infty}(A)$.
	
	Assume now that $A=(a_{k,j})_{k,j\in\NN}$ is a K\"othe matrix and that $p \in [1,\infty]$ is a fixed value such that the {\em backward shift operator} $B:\lambda^p(A)\longrightarrow\lambda^p(A)$, acting in $\lambda^p(A)$ as in Example~\ref{Ex:B.omega}, is well-defined and hence continuous by the Closed Graph Theorem. Using the known characterization of bounded sets for K\"othe spaces (see \cite[Lemma~27.5]{MeisVogt1997_book}) a vector $x=(x_j)_{j\in\NN} \in \lambda^p(A)$ belongs to $\lbo(B)$ if and only if there exist $k_0 \in \NN$, $\eps>0$ and $w=(w_j)_{j\in\NN} \in \ ]0,+\infty[^{\NN} \ \cap \ \lambda^{\infty}(A)$ such that
	\[
	\forall n \in \NN \quad \text{with} \quad \sum_{j\in\NN} |(x_j - x_{n+j}) a_{k_0,j}|^p < \eps^p, \quad \text{then} \quad \sum_{j\in\NN} \left( \frac{|x_{n+j}|}{w_j} \right)^p \leq 1,
	\]
	when $1\leq p< \infty$, or such that
	\[
	\forall n \in \NN \quad \text{with} \quad \sup_{j\in\NN} |x_j - x_{n+j}| \cdot a_{k_0,j} < \eps, \quad \text{then} \quad \sup_{j\in\NN} \frac{|x_{n+j}|}{w_j} \leq 1,
	\]
	when $p=\infty$. The dynamical behaviour of $B$, but also the space $\lambda^p(A)$, strongly depend on the matrix $A$, so there are not general locally bounded orbits for $B:\lambda^p(A)\longrightarrow\lambda^p(A)$. Indeed, $\lambda^p(A)$ could be a Banach space and hence $\lbo(B)=\lambda^p(A)$, but also the operator $B:\omega\longrightarrow\omega$ from Example~\ref{Ex:B.omega} is a particular case of $B:\lambda^{\infty}(A)\longrightarrow\lambda^{\infty}(A)$, precisely when $a_{k,j}=1$ for $j\leq k$ and $0$ otherwise.
\end{example}


We finish this subsection by showing that if a locally bounded orbit has a too strong recurrent behaviour then the orbit has to be bounded. This will be the case for uniformly recurrent locally bounded orbits. A vector $x \in X$ is called {\em uniformly recurrent for $T \in \Lc(X)$} if for every neighbourhood $U$ of $x$ the return set $\Nc_T(x,U) = \{ n \in \NN \ ; \ T^nx \in U \}$ has bounded gaps, that is, if for the strictly increasing sequence of integers $(n_k)_{k\in\NN}$ forming the set $\Nc_T(x,U) = \{ n_k \ ; \ k \in \NN \}$ we have that
\[
\sup_{k\in\NN} (n_{k+1} - n_k) < \infty.
\]
We will denote by $\URec(T)$ the {\em set of uniformly recurrent vectors for $T$}, and we will say that the operator $T$ is {\em uniformly recurrent} whenever the set $\URec(T)$ is dense in $X$. Uniform recurrence is a very strong notion that has sometimes been called ``almost periodicity'' (see for instance \cite{BoGrLoPe2022}). It is not hard to check that the orbit of a uniformly recurrent vector for an operator $T \in \Lc(X)$ is bounded when $X$ is a Banach space, but as shown in \cite[Example~3.3]{BoGrLoPe2022}, not necessarily bounded when $X$ is just a (non-Banach) Fr\'echet space. Let us show that local boundedness is the key in this fact:

\begin{proposition}\label{Pro:lbo->URec.bounded}
	Given an operator $T \in \Lc(X)$ acting on a Fr\'echet space $X$, the orbit of a uniformly recurrent vector $x \in \URec(T)$ is locally bounded for $T$ if and only if its orbit $\orb_T(x)$ is bounded. Hence, if the set $\URec(T) \cap \lbo(T)$ is non-meager then $T$ is power-bounded and $X=\URec(T)$.
\end{proposition}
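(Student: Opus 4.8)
The plan is to prove the equivalence of Part~1 first and then bootstrap it into the ``Hence'' statement via Baire category. The \emph{if} direction of the equivalence is immediate: if $\orb_T(x)$ is bounded then $U\cap\orb_T(x)\subset\orb_T(x)$ is bounded for every neighbourhood $U$, so $x\in\lbo(T)$. The content is the \emph{only if} direction. Assume $x\in\URec(T)\cap\lbo(T)$, fix by local boundedness a neighbourhood $U$ of $x$ with $U\cap\orb_T(x)$ bounded, and use uniform recurrence to write the syndetic return set $\Nc_T(x,U)=\{n_k : k\in\NN\}$ with gaps bounded by $p:=\sup_k(n_{k+1}-n_k)<\infty$. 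The key observation is that every orbit point is a bounded-length iterate of a point of the \emph{bounded} set $U\cap\orb_T(x)$: given $n\ge n_1$, choosing the largest $n_k\le n$ gives $n=n_k+j$ with $0\le j<p$, so $T^nx=T^j(T^{n_k}x)$ with $T^{n_k}x\in U\cap\orb_T(x)$. Hence
\[
\orb_T(x)\subset\{T^1x,\dots,T^{n_1-1}x\}\cup\bigcup_{j=0}^{p-1}T^j\big(U\cap\orb_T(x)\big),
\]
a finite union of a finite set and of continuous-linear images of a bounded set, and is therefore bounded.

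For the ``Hence'' part I would first derive that $T$ is power-bounded (that is, the set $\mathrm{Bdd}(T):=\{x\in X:\orb_T(x)\text{ is bounded}\}$ equals $X$). By Part~1 we have $\URec(T)\cap\lbo(T)\subset\mathrm{Bdd}(T)$, so the hypothesis makes $\mathrm{Bdd}(T)$ non-meager in the Baire space $X$. Fixing an increasing sequence of seminorms $(p_k)_{k\in\NN}$ defining $\tau_X$, write $\mathrm{Bdd}(T)=\bigcap_k G_k$ with $G_k:=\{x:\sup_n p_k(T^nx)<\infty\}=\bigcup_{M\in\NN}F_{k,M}$, where $F_{k,M}:=\{x:p_k(T^nx)\le M\text{ for all }n\}$ is a closed, convex, balanced set (an intersection of sublevel sets of the seminorms $p_k\circ T^n$). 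Each $G_k\supset\mathrm{Bdd}(T)$ is non-meager, so some $F_{k,M}$ is non-meager and, being closed, has non-empty interior; a convex balanced set with interior is a neighbourhood of $0$ and hence absorbing, which forces $G_k=X$. As $k$ is arbitrary, $\mathrm{Bdd}(T)=X$, so $T$ is power-bounded, and since Fréchet spaces are barrelled the Banach--Steinhaus theorem upgrades this to equicontinuity of $\{T^n:n\in\NN\}$. By Remark~\ref{Rem:lbo.1} power-boundedness also gives $\lbo(T)=X$, so now $\URec(T)=\URec(T)\cap\lbo(T)$ is non-meager.

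It remains to prove $X=\URec(T)$, and here I would establish the stronger structural fact that, for the equicontinuous operator $T$, the set $\URec(T)$ coincides with $S:=\{x\in X:\cl{\orb_T(x)}\text{ is }\tau_X\text{-compact}\}$. The set $S$ is manifestly a linear subspace, since $\cl{\orb_T(x+y)}\subset\cl{\orb_T(x)}+\cl{\orb_T(y)}$ and a sum of compacts is compact; and a proper closed subspace of $X$ is nowhere dense, hence meager, so once $S$ is seen to be closed and equal to the non-meager set $\URec(T)$ it must be all of $X$. The inclusion $S\subset\URec(T)$ is the classical pointwise almost periodicity of compact equicontinuous systems, applied to the compact invariant set $\cl{\orb_T(x)}$ on which $T$ restricts to an equicontinuous homeomorphism-type dynamics. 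Closedness of $S$ follows from equicontinuity: if $x_m\to x$ with $\cl{\orb_T(x_m)}$ compact, then $p_k(T^nx-T^nx_m)$ is small \emph{uniformly in $n$}, so $\orb_T(x)$ lies within a uniform distance of a totally bounded set and is itself totally bounded, whence $x\in S$ by completeness of $X$.

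The main obstacle is the reverse inclusion $\URec(T)\subset S$, namely that a uniformly recurrent orbit is relatively \emph{compact}; this is precisely where the non-Banach Fréchet setting bites, since boundedness no longer yields relative compactness. I would extract compactness directly from uniform recurrence: given $k$ and $\varepsilon$, equicontinuity provides an index $j=j(k)$ and a constant $C$ with $p_k(T^nz)\le C\,p_j(z)$ for all $n,z$; uniform recurrence for the seminorm $p_j$ bounds by some $p$ the gaps of the return times to a small $p_j$-ball of $x$, and writing $T^nx=T^i(T^{n'}x)$ with $0\le i<p$ and $T^{n'}x$ close to $x$ in $p_j$, the estimate $p_k(T^nx-T^ix)\le C\,p_j(T^{n'}x-x)$ shows that $\{T^nx:n\in\NN\}$ is covered by the $p$ balls centred at $T^0x,\dots,T^{p-1}x$ of $p_k$-radius $C\varepsilon$. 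Thus the orbit is totally bounded in every seminorm and, by completeness, has compact closure. This closes the loop: $\URec(T)=S$ is a closed linear subspace which is non-meager, hence equal to $X$, and power-boundedness was already established, completing the proof.
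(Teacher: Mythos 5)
Your proof of the equivalence (a uniformly recurrent locally bounded orbit is bounded) is correct and is essentially the paper's argument: the paper likewise covers $\orb_T(x)$ by $\bigcup_{j=0}^{N} T^j\left(U\cap\orb_T(x)\right)$ with $N$ the maximal gap of the syndetic return set $\Nc_T(x,U)$. Your derivation of power-boundedness from non-meagerness of $\URec(T)\cap\lbo(T)$ is also fine; the paper simply invokes the Banach--Steinhaus theorem where you spell out the Baire-category/absorbing-set argument.

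The gap is in the final step, $X=\URec(T)$. You claim that, $T$ being equicontinuous, the set $S=\{x\in X \ ; \ \cl{\orb_T(x)} \text{ is compact}\}$ satisfies $S\subset\URec(T)$ by ``classical pointwise almost periodicity of compact equicontinuous systems''. That classical fact requires $T$ to act as a homeomorphism (or at least surjectively) on the orbit closure, which a linear operator need not do, and the inclusion is false in general: take $T=\tfrac{1}{2}I$ on any Fr\'echet space. Then $\{T^n\}_{n\in\NN}$ is equicontinuous and every orbit closure $\{2^{-n}x \ ; \ n\in\NN\}\cup\{0\}$ is compact, so $S=X$, yet $\URec(T)=\Rec(T)=\{0\}$. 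Hence your identity $\URec(T)=S$ fails, and your argument only yields that every orbit is relatively compact, not that every vector is uniformly recurrent. (This example does not contradict the proposition, since there $\URec(T)\cap\lbo(T)=\{0\}$ is meager; it only shows your intermediate lemma cannot be the route.) The paper closes this step by citing \cite[Theorem~3.1]{BoGrLoPe2022}, which is precisely the non-trivial input here --- for a power-bounded operator, uniform recurrence on a non-meager (indeed, dense) set propagates to all of $X$; its proof amounts to showing that $\URec(T)$ itself is a closed subspace for power-bounded operators, which is more delicate than the closedness of $S$ and is not recovered by your argument.
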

\begin{proof}
	If there is a neighbourhood $U$ of $x$ such that $U \cap \orb_T(x)$ is a bounded set and $N \in \NN$ is the maximum gap between two consecutive elements from $\Nc_T(x,U)$, then one can check that
	\[
	\orb_T(x) \subset \bigcup_{j=0}^N T^j\left( U \cap \orb_T(x) \right),
	\]
	which are bounded sets by the continuity of $T$. Hence, if $\URec(T) \cap \lbo(T)$ is a non-meager set then $T$ is power-bounded by Banach-Steinhaus, and by \cite[Theorem~3.1]{BoGrLoPe2022} we get that $X=\URec(T)$.
\end{proof}

This last result is an extension of \cite[Corollary~3.2]{BoGrLoPe2022} that shows, in some way, what kind of (weak) boundedness is exhibited by a uniformly recurrent orbit in a Fr\'echet space (see \cite[Section~3]{BoGrLoPe2022}). We would also like to point out that Proposition~\ref{Pro:lbo->URec.bounded} does not hold for locally bounded orbits with a weaker recurrent behaviour than that of uniform recurrence:

\begin{example}
	From the Furstenberg families $\Fc \subset \Part(\NN)$ studied in the literature in the context of Linear Dynamics, the next notion of $\Fc$-recurrence slightly weaker than uniform recurrence is that of {\em frequent recurrence} as defined in the Introduction. As we have mentioned in Subsection~\ref{SubSec_3.1:applications+optimality}, this notion coincides with {\em $\Fc$-recurrence} for the family $\Fc=\Dinf$ of positive lower density sets. A standard separation result such as \cite[Lemma~9.5]{GrPe2011_book} shows that the family $\Fc=\Dinf$ has the following property:
	\[
	(*) \quad \begin{cases} \text{there exists a sequence of pairwise disjoint sets } (A_k)_{k\in\NN} \in \Fc^{\NN} \text{ such that:} \\[5pt] \text{for every } n \in A_k \text{ and every } n' \in A_{k'} \text{ with } n\neq n', \ \text{ then } \ |n-n'|\geq \max\{k,k'\}. \end{cases}
	\]
	Let us show that, given a Furstenberg family $\Fc \subset \Part(\NN)$ fulfilling $(*)$, then we can construct for $B:\omega\longrightarrow\omega$ a vector $x \in \Fc\Rec(B) \cap \lbo(B)$ such that $\orb_B(x)$ is not bounded:
	
	First of all, we may assume that $\min(A_k)>k$ for every $k \in \NN$ by taking a subsequence of $(A_k)_{k\in\NN}$ if necessary. Let $(m_s)_{s\in\NN} \in \NN^{\NN}$ be the increasing sequence of integers forming the set $\bigcup_{k\in\NN} A_k$. We now construct the vector $x=(x_j)_{j\in\NN} \in \omega$ recursively:
	\begin{enumerate}[--]
		\item \textbf{Step 1}: We start by letting $x_1=1$, and $x_j=0$ for every $1<j\leq m_1$ in case that $1<m_1$. We have fixed the first ``$m_1$'' coordinates of the final vector $x=(x_j)_{j\in\NN} \in \omega$.
		
		\item \textbf{Step 2}: Now we have that $m_1 \in A_l$ for some $l \in \NN$. By property $(*)$ and the assumptions on $(A_k)_{k\in\NN}$ we have that $m_2-m_1 \geq l$ and also that $m_1>l$, so we have no problems in letting
		\[
		(x_{m_1+1},x_{m_1+2},...,x_{m_1+l-1},x_{m_1+l}) :=
		\begin{cases}
			(x_1,x_2,x_3,...,x_{l-1},x_l) & \text{ if } l=2k+1 \text{ for some } k \geq 0, \\[5pt]
			(1,2,3,4,5,...,l-1,l) & \text{ if } l=2k \text{ for some } k \geq 1,
		\end{cases}
		\]
		and we let $x_j=0$ for every $m_1+l<j\leq m_2$ in case that $m_1+l<m_2$. We have fixed the first ``$m_2$'' coordinates of the final vector $x=(x_j)_{j\in\NN} \in \omega$.
		
		\item \textbf{Step 3}: We repeat \textbf{Step 2} infinitely many times by considering each time $m_s$ for $s\geq 2$, and using that $m_{s+1}-m_s \geq l$ as soon as $m_s \in A_l$ by $(*)$, but also that $m_s>l$ by the assumptions on $(A_k)_{k\in\NN}$. We let $x=(x_j)_{j\in\NN} \in \omega$ be the final limit sequence obtained from this recursive process.
	\end{enumerate}
	From the previous construction it is not hard to check that $x = (x_j)_{j\in\NN}$ fulfills the characterization of locally bounded orbit given in Example~\ref{Ex:B.omega} for $B:\omega\longrightarrow\omega$ with the parameters $k_0:=1$, any positive value $0<\eps<1$, and the sequence $w = (w_j)_{j\in\NN} := (j)_{j\in\NN} \in \ ]0,+\infty[^{\NN}$. However, the set $\orb_B(x)$ is not bounded since $x = (x_j)_{j\in\NN} \notin \ell^{\infty}_{\KK} = \{ (x_j)_{j\in\NN} \in \KK^{\NN} \ ; \ \sup_{j\in\NN} |x_j| < +\infty \}$. Moreover, the construction implies that for each positive integer $k \in \NN$ we have the following equality
	\[
	\max_{1\leq j\leq 2k+1} \left| \left[B^{n}(x)\right]_j - x_j \right| = 0 \quad \text{ for every } n \in A_{2k+1} \in \Fc.
	\]
	We deduce that for each neighbourhood $U$ of $x$ in $\omega$, with respect to the topology of convergence in all coordinates, the return set $\Nc_B(x,U)$ belongs to $\Fc$ and hence $x \in \Fc\Rec(B)$. We would like to mention that the families $\Fc \subset \Part(\NN)$ fulfilling $(*)$ have been called {\em hypercyclicity sets} in \cite{BesMePePu2016}. Assuming $(*)$ it is also not hard to construct an $\Fc$-hypercyclic vector for the backward shift $B:\omega\longrightarrow\omega$.
\end{example}

\subsection{Some problems}


In Example~\ref{Ex:B.omega} we show that the set $\lbo(T)$ is not necessarily $T$-invariant when the operator $T \in \Lc(X)$ is not invertible, but one can check that the inclusion $B(\Rec(B) \cap \lbo(B)) \subset \Rec(B) \cap \lbo(B)$ holds for the backward shift operator $B$ in both Examples~\ref{Ex:B.omega} and \ref{Ex:B.lambda}. This motivates our first problem: 

\begin{problem}
	Is the inclusion $T(\Rec(T) \cap \lbo(T)) \subset \Rec(T) \cap \lbo(T)$ true for every continuous linear operator $T \in \Lc(X)$? What about the inclusion $T(X\setminus\lbo(T)) \subset X\setminus\lbo(T)$?
\end{problem}

Note that if $T(X\setminus\lbo(T)) \subset X\setminus\lbo(T)$ is true, then $T(\Rec(T) \cap \lbo(T)) \subset \Rec(T) \cap \lbo(T)$ follows trivially from the following reasoning: given $x \in \Rec(T) \cap \lbo(T)$ and an \textbf{open} neighbourhood $U$ of $x$ such that $U \cap \orb_T(x)$ is bounded, then we have that $\orb_T(x) \cap U \subset \Rec(T) \cap \lbo(T)$.


An operator $S \in \Lc(Y)$ is said to be {\em quasi-conjugate} (resp.\ {\em conjugate}) to a second operator $T \in \Lc(X)$ if there exists a continuous map $J:X\longrightarrow Y$ for which $S \circ J = J \circ T$ and such that $J$ has dense range (resp.\ $J$ is an homeomorphism); and a property $P$ is said to be {\em preserved under (quasi-)conjugacy} when the following holds: if an operator $T \in \Lc(X)$ has property $P$ then every operator $S \in \Lc(Y)$ that is (quasi-)conjugate to $T$ also has property $P$. In Example~\ref{Ex:B.omega} we have shown that locally bounded orbits are not preserved under quasi-conjugacy since $B:\omega\longrightarrow\omega$ is quasi-conjugate to itself by taking $J=B=S$ and there exists $z \in \lbo(B)$ such that $Bz \notin \lbo(B)$. This motivates the following problem:

\begin{problem}
	Are locally bounded orbits preserved under conjugacy?
\end{problem}

Note that given $T \in \Lc(X)$, $S \in \Lc(Y)$ and an homeomorphism $J:X\longrightarrow Y$ with $S \circ J = J \circ T$, if $J$ is linear then the same arguments from Proposition~\ref{Pro:invertible->invariant} show that $J(\lbo(T)) = \lbo(S)$ and also that $\lbo(T)=J^{-1}(\lbo(S))$, so that the problem here is knowing what happens if $J$ is non-linear.


The following questions seem to be non-trivial and all of them are based on removing the locally bounded orbit assumption in Theorems~\ref{The:lbo->measure}, \ref{The:lbo->PF.property} and \ref{The:lbo->chaos}. Using Remark~\ref{Rem:Theorem.conditions}, we state them directly for adjoint operators acting on the dual of a (DF)-space:

\begin{problems}
	Let $(Y,\tau_Y)$ be a (DF)-space whose strong dual Fr\'echet space $(X,\tau_X):=(Y',\beta(Y',Y))$ is separable and let $T \in \Lc(X)$ be the adjoint of some linear map $S:Y\longrightarrow Y$. Then we ask: 
	\begin{enumerate}[(a)]
		\item Given a vector $x_0 \in \RRec(T) \setminus \lbo(T)$, does it follow that there exists a $T$-invariant probability measure $\mu$ on $(X,\Bi(X))$ fulfilling that $x_0 \in \supp(\mu)$?
		
		\item Given a non-empty open set $U \subset X$ and a Furstenberg family $\Fc \subset \Part(\NN)$, does the existence of a vector $x \in U \cap [b\Fc]\Rec(T) \setminus \lbo(T)$ implies the existence of $z \in U$ fulfilling that $\Nc_T(z,U) \in \Fc$?
		
		\item Given a left-invariant Furstenberg family $\Fc \subset \Part(\NN)$ for which $T$ is almost-$b\Fc$-recurrent, does it follow that $T$ is an almost-$\Fc$-recurrent operator?
		
		\item If $T$ is an $\AP_b$-hypercyclic operator, does if follow that $T$ is Devaney chaotic?
	\end{enumerate}
	What happens if $T \in \Lc(X)$ is an operator acting on a separable reflexive Fr\'echet space $(X,\tau_X)$?
\end{problems}

\section*{Funding}

The author was partially supported by the Spanish Ministerio de Ciencia, Innovaci\'on y Universidades (grant number FPU2019/04094); also by MCIN/AEI/10.13039/501100011033/FEDER, UE Projects PID2019-105011GB-I00 and PID2022-139449NB-I00; and by the Fundaci\'o Ferran Sunyer i Balaguer.

\section*{Acknowledgments}

The author would like to thank Jos\'e Bonet and Alfred Peris for their valuable advice and numerous readings of the manuscript, and to specially thank Santiago Muro for finding the reference \cite{Sucheston1964}. The author also thanks the anonymous reviewer, whose comments have considerably improved the paper.

{\footnotesize

}

{\footnotesize
$\ $\\

\textsc{Antoni L\'opez-Mart\'inez}: Universitat Polit\`ecnica de Val\`encia, Institut Universitari de Matem\`atica Pura i Aplicada, Edifici 8E, 4a planta, 46022 Val\`encia, Spain. e-mail: alopezmartinez@mat.upv.es
}

\end{document}